\def\IC{\mathcal{I}}
\def\MC{\mathcal{M}}
\def\FC{\mathcal{F}}
\def\E{\mathbf{E}}
\def\N{\mathbf{N}}
\def\P{\mathbf{P}}
\def\R{\mathbf{R}}
\def\1{\mathbf{1}}
\def\al{\alpha}
\def\be{\beta}
\def\pa{\partial}
\def\de{\delta}
\def\ga{\gamma}
\newtheorem{prop}{Proposition}[section]
\newtheorem{theorem}{Theorem}[section]
\newtheorem{remark}{Remark}
\newcommand{\la}{\lambda}
\newcommand{\si}{\sigma}
\newcommand{\om}{\omega}
\newcommand{\Ga}{\Gamma}
\begin{document}
\title{Stochastic monotonicity and duality of $k$th order with application to put-call symmetry of powered options
\thanks{To appear: Journal of Applied Probability 52:1 (March 2015)}
}
\author{Vassili N. Kolokoltsov\thanks{Department of Statistics, University of Warwick,
 Coventry CV4 7AL UK, associate member of ZIF Bielefeld and IPI RAN Moscow,
  Email: v.kolokoltsov@warwick.ac.uk}}
\maketitle

\begin{abstract}
We introduce a notion of $k$th order stochastic monotonicity and duality that allows one to unify the notion used in insurance mathematics (sometimes refereed to as Siegmund's duality) for the study of ruin probability and the duality responsible for the so-called put - call symmetries in option pricing. Our general $k$th
order duality can be financially interpreted as put - call symmetry for powered options. The main objective of the present paper
is to develop an effective analytic approach to the analysis of duality leading to the full characterization of $k$th order duality of Markov processes in terms of their generators, which is new even for the well-studied case of put -call symmetries.
\end{abstract}

{\bf Key words:} stochastic monotonicity, stochastic duality, generators of dual processes, dual semigroup,
 put - call symmetry and reversal, powered and digital options, straddle.

{\bf Mathematics Subject Classification:} 60J25

\section{Introduction}

\subsection{Main objectives}

A real-valued Markov process $X_t^x$ is called stochastically monotone if $\P(X^x_t\ge y)$ is a non-decreasing function of $x$ for any $y$.
Siegmund's theorem (see \cite{Sieg}) states that if $X_t^x$ is stochastically monotone and $\P(X^x_t\ge y)$ is a right continuous function of $x$ for any $y$, then there exists a Markov process $Y_t^y$, called dual to $X_t^x$ such that
\begin{equation}
 \label{eqdefstanddual2ndmonrep}
\P (Y_t^y \le x)=\P (X_t^x \ge y)
\end{equation}
holds. This condition can be also rewritten as
\begin{equation}
 \label{eqdefstanddual2ndmonrep1}
\E \, \theta (x-Y_t^y)=\E \, \theta (X_t^x - y),
\end{equation}
where $\theta$ is the step function
\begin{equation}
 \label{eqdefstepfun}
\theta (x)=\left\{
\begin{aligned}
& 1, \quad x\ge 0 \\
& 0, \quad x<0.
\end{aligned}
\right.
\end{equation}

In the theory of option pricing, Markov processes $X_t^x,Y_t^y$ are said to satisfy the put-call symmetry relation if
\begin{equation}
\label{eqdefputcallsymm}
\E  (x-Y_t^y)_+=\E (X_t^x - y)_+
\end{equation}
holds. Looking at \eqref{eqdefputcallsymm} and \eqref{eqdefstanddual2ndmonrep1} suggests to introduce a general notion that includes these two dualities
as particular cases. Namely, let us say that
a Markov process $Y_t^y$ is dual to $X_t^x$ of order $k$, $k\in \R$, if
\begin{equation}
\label{eqdefputcallsymmgen}
\E  (x-Y_t^y)_+^{k-1}=\E (X_t^x - y)_+^{k-1}.
\end{equation}
The cases $k=2$ and $k=1$ correspond to \eqref{eqdefputcallsymm} and \eqref{eqdefstanddual2ndmonrep1} respectively
(with a natural convention that $x_+^0=\theta (x)$).
These dualities have also a clear financial interpretation describing symmetries between powered European options,
the case $k=1$ standing for a symmetry between digital options.

The aim of this paper is to fully characterize Markov processes satisfying  \eqref{eqdefputcallsymmgen} in terms of their generators,
paying special attention to processes that are martingales, as such processes appear in risk-neutral evaluation. This
characterization seems to be new even for the standard put-call symmetry \eqref{eqdefputcallsymm}, though the important particular cases
of underlying price processes being L\'evy processes or processes with price independent compensator are well studied, see \cite{AndCarr}, \cite{FaMor}.
We shall also extend the theory to time non-homogeneous processes, related notion of duality being referred to in \cite{AndCarr} as the put - call reversal.

We shall not pay attention to positivity of our martingales (which should be of course the case for realistic price processes), as this problem can be handled separately from the discussion of duality, either  by insuring that the origin is not attainable, or by directly working with exponents.

We shall also not address the issues arising at boundary points, as this development is treated separately in \cite{KoRui},
in connection with the problems from
insurance mathematics, where this question becomes crucial (ruin problem, see \cite{As98}, \cite{AsPi}, \cite{AsPe}, \cite{SigRy}, \cite{Dje93}),
because precisely
the absorption rates for attainable origin becomes there the most important quantity to study.

\subsection{Plan of the paper}

In Section \ref{secanaldual}
we introduce our analytic approach to the analysis of duality of Markov processes via their generators.
In Section  \ref{secxeampleopt} we present some simplest examples of duality arising from our results.
In Section \ref{secmonotkthorder} we extend the notion of stochastic monotonicity to arbitrary orders and prove the corresponding extension of
Siegmund's theorem linking stochastic monotonicity and duality.
In Section \ref{secmainres} we obtain our main results on the characterization of duality of one-dimensional Markov
processes via their generators. The last section is devoted to the extension of the theory to time-nonhomogeneous Markov processes.
In appendix we summarize in appropriate form some crucial facts about fractional derivatives used in the main text.

\subsection{Bibliographical comments}

Duality of Markov processes is an important topic in probability, see e.g. \cite{Lig} for an extensive introduction to the subject,
with special stress on interacting particles, see also \cite{Chenbook04} for the related study of stochastic monotonicity.
Paper \cite{Sieg} initiated systematic research of the duality based on stochastic order.
For crucial applications of duality in super-processes we can refer to \cite{EK} and \cite{Myt}.
The duality for general recursions and the duality for discrete Markov chains are developed in \cite{AsSig} and
\cite{HuiMar} respectively. For stochastic monotonicity and duality of birth and death processes we refer to \cite{VanDo}.

For general introduction to intertwining and many examples related to L\'evy processes see
\cite{Biane}, \cite{CaPeYo}, \cite{HiYo}, \cite{PPTS}, \cite{Dube} and references therein.

The subject of put - call symmetry was initiated in  \cite{Bates88}, \cite{Bart} and attracted since then lots of attention.
We can refer to papers \cite{CaLe}, \cite{MolSch}, \cite{EbPaSh} for detailed reviews of recent developments.
Let us mention specifically papers \cite{AndCarr}, \cite{FaMor}, where put - call symmetry was analyzed for markets based on diffusions with price independent jumps
and L\'evy processes respectively. Paper \cite{CaCh96} developed the theory for American options and papers
\cite{HenHoSh}, \cite{HenWo} for Asian options.  Paper \cite{EbPaSh} characterizes the symmetry in terms of semimartingale characteristics of general
dual semimartingales related by the dual martingale measures.
An important recent development concerns the study of quasi self-dual process,
 which relates the conditional symmetry properties of both their ordinary as well as their stochastic logarithms,
 see \cite{RheiSchm1}, \cite{RheiSchm2}.

For the application to insurance mathematics we refer to \cite{As98}, \cite{AsPi}, \cite{SigRy}, \cite{Dje93}) and references therein.

The approach to the study of Siegmund's duality via generators was initiated in \cite{Ko03} and continued in \cite{Ko10}, see
also monograph  \cite{Kobook11}.

\section{Analytic approach to the analysis of duality}
\label{secanaldual}

\subsection{Definition of stochastic duality}

Let us first recall the standard definition of duality of Markov processes.

Let $X_t^x$ and $Y_t^y$ be two  Markov processes
(small $x,y$ here and in what follows stand for the initial points)
with values in possibly different Borel spaces $X$ and $Y$. Then $Y$ is called dual to $X$ with respect to a Borel function $f$ on $X\times Y$,
or shortly $f$-dual, if
 \begin{equation}
 \label{eqdefstanddual2nd}
\E f(x,Y_t^y)=\E f(X_t^x,y)
\end{equation}
for all $x\in X, y\in Y$, where $\E$ on the left hand side and the right hand side correspond to the distributions
of the processes $Y_t^y$ and $X_t^x$ respectively.

An important example is given by the duality equation
\begin{equation}
 \label{eqdefstanddual2ndmon}
\P (Y_t^y \le x)=\P (X_t^x \ge y),
\end{equation}
where $\ge$ is a partial order. This is a particular case of  \eqref{eqdefstanddual2nd}
 with $f(x,y)=\1_{\{x\ge y\}}$ (we denote here and in what follows by $\1_M$ the indicator function of the set $M$).

From the point of view of the general definition of $f$-duality, duality of $k$th order given by \eqref{eqdefputcallsymmgen} corresponds to $f_k$-duality
for $f_k(x,y)=(x-y)_+^{k-1}$, where $x_{\pm}=\max(0,\pm x)$.

\subsection{Analytic counterpart of duality}

For a metric space $X$ we denote by $B(X), C(X), \MC(X)$ the Banach spaces of bounded measurable functions, bounded continuous functions
and bounded signed Borel measures, first two spaces equipped with the sup-norm and the last one with the total variation norm.
If $X$ is locally compact, $C_{\infty}(X)$ denotes the closed
subspace of  $C(X)$ of functions vanishing at infinity.
The standard duality between $B(X)$ and $\MC(X)$ is given by the integration: $(f, \mu)=\int_X f(x) \mu (dx)$.

 By a signed (stochastic) kernel from $X$ to $Y$ we mean, as usual, a function of two variables $p(x, A)$, where $x\in X$ and $A$ are Borel subsets of $Y$ such that $p(x,.)$ is a bounded signed measure on $Y$ for any $x$ and $p(., A)$ is a Borel function for any Borel set $A$. We say that this kernel is bounded if $\sup_x \|p(x,.)\|<\infty$.

Any bounded kernel specifies an integral operator $B(Y) \to B(X)$ via the formula
\[
Ug(x)=\int_Y g(z) p(x, dz).
\]
The standard dual operator $U'$ is defined as the operator $\MC(X)\to \MC(Y)$ specified by the duality relation
\[
(f,U'\mu)=(Uf, \mu),
\]
or explicitly as
\[
U'\mu(dy)=\int_X p(x, dy) \mu (dx).
\]

A bounded linear operator $U^{D(f)}$ in $B(Y)$ (or $C(Y)$  or $C_{\infty}(Y)$) is said to be $f$-dual to
a bounded linear operator $U$ in $B(X)$ (or $C(X)$  or $C_{\infty}(X)$) if, for any $x,y$,
  \begin{equation}
 \label{eqdeffTdualanal}
(U^{D(f)} f(x,.))(y) =(U f(.,y))(x).
\end{equation}

Let us say that a function $f$ on $X\times Y$
 {\it separates measures on} $X$ if,
for any $Q_1,Q_2 \in \MC(X)$, there exists $y \in Y$ such that $\int f(x,y) Q_1(dx)\neq \int f(x,y) Q_2(dx)$.
  If this is the case, the integral operator $F=F_f:\MC(X)\to B(Y)$ given by
\begin{equation}
\label{eqdefdualsemigroupsF}
(FQ)(y)=\int f(x,y) Q(dx)
\end{equation}
is an injective bounded operator, so that the linear inverse $F^{-1}$ is defined on the image $F(\MC(X))$.
Let us say that the function $FQ$ is $f$-generated by $Q$.

Our analysis will be based on the following simple but crucial observation.

\begin{prop}
\label{propdualop}
Let $f$ be a bounded measurable function separating measures on $X$ and $U$ an integral operator in $B(X)$ with a bounded signed kernel $p$.

(i) Suppose $U^{D(f)}$ is an integral operator with a bounded kernel $p^{D(f)}(y,dz)$ satisfying \eqref{eqdeffTdualanal}. Then
the action of $U^{D(f)}$ on $F(\MC(X))$ is given by the equation
\begin{equation}
\label{eqpropdualop}
U^{D(f)}=F \circ U' \circ F^{-1},
\end{equation}
or, equivalently, $U^{D(f)}$ satisfies the intertwining relation
\begin{equation}
\label{eqpropdualop1}
U^{D(f)} \circ F = F \circ U'.
\end{equation}

(ii) Let us define an operator $U^{D(f)}$ on $F(\MC(X))$ by the the linear extension of relation \eqref{eqdeffTdualanal}, that is, by the equation
\begin{equation}
\label{eqdeffTdualanalonF}
(U^{D(f)} F(Q))(y)= (U^{D(f)} \int_X f(x,.) Q(dx))(y) =\int_X (Uf(.,y))(x) Q(dx).
\end{equation}
Then $U^{D(f)}$
is well defined on $F(\MC(X))$ and \eqref{eqpropdualop} holds.
\end{prop}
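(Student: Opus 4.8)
The plan is to prove both parts by reducing the intertwining relation \eqref{eqpropdualop1} to the defining property of the standard dual $U'$, namely $(g,U'Q)=(Ug,Q)$. The computation of one side, $F\circ U'$, is common to both parts and I would carry it out first. For arbitrary $Q\in\MC(X)$, the definition \eqref{eqdefdualsemigroupsF} gives
\begin{equation*}
(F(U'Q))(y)=\int_X f(x,y)\,(U'Q)(dx)=(f(\cdot,y),U'Q),
\end{equation*}
and applying the duality relation with $g=f(\cdot,y)$ yields
\begin{equation*}
(F(U'Q))(y)=(Uf(\cdot,y),Q)=\int_X (Uf(\cdot,y))(x)\,Q(dx).
\end{equation*}
This identity already coincides with the right-hand side of the defining equation \eqref{eqdeffTdualanalonF}.

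For part (ii) the work is then essentially finished. The separating hypothesis on $f$ makes $F$ injective, so every element of $F(\MC(X))$ has a unique preimage $Q$; hence \eqref{eqdeffTdualanalonF} assigns it a unique value and $U^{D(f)}$ is well defined. Combined with the computation above, the defining relation reads $U^{D(f)}\circ F=F\circ U'$, which is \eqref{eqpropdualop1}, and composing with $F^{-1}$ on the right over the image $F(\MC(X))$ gives \eqref{eqpropdualop}.

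For part (i) I would instead compute the other side, $U^{D(f)}\circ F$, and check it agrees. Writing $F(Q)(z)=\int_X f(x,z)\,Q(dx)$ and applying the bounded integral operator $U^{D(f)}$ with kernel $p^{D(f)}(y,dz)$,
\begin{equation*}
(U^{D(f)} F(Q))(y)=\int_Y\Bigl(\int_X f(x,z)\,Q(dx)\Bigr)p^{D(f)}(y,dz).
\end{equation*}
The crucial step is to interchange the order of integration, after which the inner integral is precisely $(U^{D(f)} f(x,\cdot))(y)$; the assumed duality relation \eqref{eqdeffTdualanal} then turns this into $(Uf(\cdot,y))(x)$, giving
\begin{equation*}
(U^{D(f)} F(Q))(y)=\int_X (Uf(\cdot,y))(x)\,Q(dx)=(F(U'Q))(y).
\end{equation*}
Thus \eqref{eqpropdualop1} holds, and injectivity of $F$ again yields \eqref{eqpropdualop} on the image.

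The only genuinely technical point I anticipate is the Fubini interchange in part (i), but I expect it to pose no real obstacle: since $f$ is bounded, $Q$ has finite total variation, and $p^{D(f)}(y,\cdot)$ is a bounded signed measure, the double integral is absolutely convergent and the interchange is unconditionally valid. Everything else is bookkeeping around the injectivity of $F$ and the definition of $U'$, so the content of the statement lies in its clean formulation rather than in any delicate estimate.
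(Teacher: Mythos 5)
Your proposal is correct and follows essentially the same route as the paper: the kernel representation of $U^{D(f)}$, a Fubini interchange to expose $(U^{D(f)}f(x,\cdot))(y)$, the duality relation \eqref{eqdeffTdualanal}, and identification of the result as $F(U'Q)$ (you invoke the abstract pairing $(f(\cdot,y),U'Q)=(Uf(\cdot,y),Q)$ where the paper performs the equivalent second Fubini with the explicit kernel of $U'$). The only presentational difference is that you compute $F\circ U'$ once up front and reuse it for both parts, which is a clean way to organize the same argument.
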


\begin{proof}
(i) Let $g\in F(\MC(X))$ be given by $g(y)=\int f(x,y) Q_g(dx)$. Then
\[
 U^{D(f)}g(y)=\int_Y g(z) p^{D(f)}(y, dz),
 \]
 which by Fubini's theorem rewrites as
 \[
 U^{D(f)}g(y)=\int_X \left(\int_Y f(x,z) p^{D(f)}(y, dz)\right)  Q_g(dx),
 \]
 and consequently as \eqref{eqdeffTdualanalonF} with $Q=Q_g$. Hence
 \[
 U^{D(f)}g(y)=\int_X\int_Y f(z,y) p(x,dz)Q_g(dx)=\int_Y f(z,y) \tilde Q (dz),
\]
with
\[
\tilde Q(dz)=\int p(x,dz) Q_g(dx).
\]
Thus $U^{D(f)}g$ is $f$-generated by $\tilde Q=U'Q_g$, as required.

(ii) Instead of using Fubini's theorem, we start with \eqref{eqdeffTdualanalonF} by definition. The remaining calculations are the same.
\end{proof}

\begin{remark}
For discrete Markov chains, Proposition \ref{propdualop} was proved in  \cite{HuiMar}.
\end{remark}

\subsection{Application to semigroups and Markov processes}

Representation \eqref{eqpropdualop} has the following direct implication for the theory of semigroups.

\begin{prop}
\label{propdualsemi}
(i) Let $f$ be a bounded measurable function separating measures on $X$ and $T_t$ a semigroup of integral operators in $B(X)$ (or $C(X)$, or $C_{\infty}(X)$)
specified by the family of bounded signed kernel $p_t(x,dz)$ from $X$ to $X$.
Then the dual operators $T_t^{D(f)}$ (defined by \eqref{eqdeffTdualanalonF} with $U=T_t$) in $F(\MC(X))$ also form a semigroup and
\begin{equation}
\label{eqpropdualopesemig}
T_t^{D(f)}=F \circ T'_t \circ F^{-1}.
\end{equation}

(ii)
If the semigroup $T_t$ is generated by an operator $L$ in $C(X)$ defined on some invariant (under all $T_t$) domain $D$, then
\[
\left.\frac{d}{dt}\right|_{t=0}T_t^{D(f)}g= F \circ \left.\frac{d}{dt}\right|_{t=0} T' \circ F^{-1}g=F \circ L' \circ F^{-1}g,
\]
that is, the generator of the semigroup $T_t^{D(f)}$ is
\begin{equation}
\label{eqdualgengen}
L^{D(f)}=F \circ L' \circ F^{-1},
\end{equation}
with domain containing the image (under $F$) of the domain of $L'$.
\end{prop}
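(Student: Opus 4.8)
The plan is to leverage Proposition \ref{propdualop} as the engine for both parts, since the statement of Proposition \ref{propdualsemi} is essentially a ``dynamical'' upgrade of the static intertwining relation \eqref{eqpropdualop}. For part (i), I would first observe that for each fixed $t$ the operator $T_t$ is an integral operator with bounded kernel $p_t$, so Proposition \ref{propdualop}(ii) applies directly and yields that $T_t^{D(f)}$ is well defined on $F(\MC(X))$ and satisfies $T_t^{D(f)} = F \circ T'_t \circ F^{-1}$, which is precisely \eqref{eqpropdualopesemig}. The remaining content of (i) is the \emph{semigroup} property of the family $T_t^{D(f)}$. Here I would simply compute, for $s,t\ge 0$,
\begin{equation}
\label{eqplansemigroupcheck}
T_s^{D(f)} \circ T_t^{D(f)} = (F\circ T'_s\circ F^{-1})\circ(F\circ T'_t\circ F^{-1}) = F\circ T'_s\circ T'_t\circ F^{-1},
\end{equation}
and then use that dualization is a contravariant anti-homomorphism together with the semigroup identity $T_{s}T_t = T_{s+t}$. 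The key algebraic fact is that $(T_s T_t)' = T'_t T'_s$ for the standard dual, but because the $T_t$ commute as a one-parameter semigroup we have $T'_s T'_t = T'_t T'_s = (T_{s+t})' = T'_{s+t}$, so the right-hand side of \eqref{eqplansemigroupcheck} collapses to $F\circ T'_{s+t}\circ F^{-1} = T_{s+t}^{D(f)}$.

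For part (ii), the plan is to differentiate the semigroup representation \eqref{eqpropdualopesemig} at $t=0$. Since $F$ and $F^{-1}$ are fixed bounded linear maps (not depending on $t$), and since $T'_t$ is the standard dual semigroup, I would pull the derivative through the outer conjugating operators and use that the generator of the dual semigroup $T'_t$ is $L'$, the standard dual of the generator $L$. Concretely, for $g$ in the appropriate domain one writes $g = F h$ with $h = F^{-1}g$ lying in the image of the domain of $L'$, so that $\frac{d}{dt}\big|_{t=0} T'_t h = L' h$ exists, and then
\begin{equation}
\label{eqplangen}
\left.\frac{d}{dt}\right|_{t=0} T_t^{D(f)} g = F\left(\left.\frac{d}{dt}\right|_{t=0} T'_t\, F^{-1} g\right) = F\, L'\, F^{-1} g,
\end{equation}
which identifies the generator as $L^{D(f)} = F\circ L'\circ F^{-1}$ with the stated domain, namely the $F$-image of the domain of $L'$.

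The main obstacle, and the point requiring genuine care rather than routine manipulation, is the interchange of the limit $t\to 0$ with the bounded operator $F$ in passing to \eqref{eqplangen}, that is, justifying that $F^{-1}g$ actually lies in the domain of $L'$ and that strong differentiability of $T'_t$ at that vector transfers through $F$. Because $F$ is bounded, strong convergence of $t^{-1}(T'_t - I)F^{-1}g$ to $L'F^{-1}g$ does pass through $F$, so the analytic content reduces to the standard fact that the generator of the adjoint (dual) semigroup $T'_t$ is the adjoint $L'$ of the generator $L$ on the appropriate domain; this is where the hypothesis that $D$ is invariant under all $T_t$ is used, since invariance of $D$ guarantees that $T'_t$ maps the relevant dual domain into itself and that the differentiation is legitimate there. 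I would flag that one should be slightly careful about which Banach space the duality is taken in (the $B(X)$ versus $C(X)$ versus $C_\infty(X)$ distinction), but the argument is uniform across these choices, so I would state it once and note that the same reasoning applies verbatim in each case.
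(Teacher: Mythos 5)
Your proposal is correct and follows essentially the same route as the paper, whose proof of (i) is simply the observation that the representation from Proposition \ref{propdualop} combined with the standard semigroup property of $T'_t$ in $\MC(X)$ gives the result, with (ii) then following by differentiation. You have merely spelled out in more detail the same two ingredients (the intertwining identity and the fact that dualization sends $T_{s+t}$ to $T'_s T'_t$), so no further comparison is needed.
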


\begin{proof}
(i) This is straightforward from \eqref{eqpropdualop} and the standard obvious fact that $T_t'$ form a semigroup in $\MC(X)$.
(ii) Follows from (i).
 \end{proof}

\subsection{Duality for $f$ depending on the difference of its arguments}

The theory simplifies essentially if $f$ is translation-invariant, that is, $f$ depends only on the
difference of its arguments, $f(x,y)=f(y-x)$,
with some other function $f$ that we still denote by $f$ (with some ambiguity). In this case,
 the operator $F$ from \eqref{eqdefdualsemigroupsF}, applied  to a measure $Q$ with density $q$, takes the form
\begin{equation}
\label{eqdefdualsemigroupsFsep}
g(y)=(FQ)(y)=\int_{\R^d} f(y-x) q(dx),
\end{equation}
i.e. it becomes a convolution operator.
It is then well known that under appropriate regularity assumptions, $f$ is the fundamental solution of the pseudo-differential operator
$L_f$ with the symbol
\begin{equation}
\label{eqfundsolFourier1}
L_f(p)=\frac{1}{\hat f(p)},
\end{equation}
where
\[
\hat f(p)=\int e^{-ixp} f(x) dx
\]
is the Fourier transform of $f$.

Hence $g(y)$ from \eqref{eqdefdualsemigroupsFsep} solves the equation $L_fg=q$, so that $F^{-1}=L_f$.
Multidimensional examples with differential operators $L_f$ are given in \cite{KoRui}.

\section{Simplest examples}
\label{secxeampleopt}

Let $X_t^x$ be the stable-like Markov process with the Feller semigroup generated by the operator
\begin{equation}
\label{eqgenstablelikekonedim}
Lg(x)=\mp a(x)\frac{d^k}{dx^k}, \quad k \in (0,2],
\end{equation}
with a nonnegative continuously differentiable function $a(x)$ (see Appendix for the definition of fractional derivatives and integral
  operators used here and in what follows), where the signs $\mp$ correspond to the cases $k\in (0,1)$ and $k\in (1,2]$ respectively
  (for the trivial case $k=1$ the sign is non-essential).
Then, by \eqref{eqdualgengen}, the dual generator of order $k$ is given by
\[
L^{D_k}=\mp I_k^+ \frac{d^k}{d(-x)^k} a(x) \frac{d^k}{d(-x)^k}=\mp a(x)\frac{d^k}{d(-x)^k}
\]
(where we used definition \eqref{eqdeffracint1} and the properties of $I_k$ discussed before \eqref{eqintpartsfrac0}),
so that the dual process to $X_t^x$ of order $k$ is the process $Y_t^y$ generated by
\begin{equation}
\label{eqgenstablelikekonedimdual}
L^{D_k}g(x)=\mp a(x)\frac{d^k}{d(-x)^k},
\end{equation}
leading to the following result.

\begin{prop}
\label{propstablelikekonedimdual}
Let $X_t^x$, $Y_t^y$ be Markov processes generated by \eqref{eqgenstablelikekonedim} and \eqref{eqgenstablelikekonedimdual} respectively,
 with $k\in (0,2]$. Then
\begin{equation}
\label{eqpropdualbyLapfr1}
 \E (x-Y_t^y)_+^{k-1}=\E  (X_t^x-y)_+^{k-1}.
 \end{equation}
 \end{prop}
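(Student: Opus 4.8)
The identity \eqref{eqpropdualbyLapfr1} is precisely the statement that $Y_t^y$ is $f_k$-dual to $X_t^x$ in the sense of \eqref{eqdefputcallsymmgen}, with $f_k(x,y)=(x-y)_+^{k-1}$. The plan is therefore to feed the pair \eqref{eqgenstablelikekonedim}, \eqref{eqgenstablelikekonedimdual} into the general machinery of Section \ref{secanaldual}: I would show that the generator \eqref{eqgenstablelikekonedimdual} of $Y$ is exactly the $f_k$-dual generator produced by \eqref{eqdualgengen}, and then transfer this generator-level identity to the semigroup level, where the defining relation \eqref{eqdeffTdualanal} becomes \eqref{eqpropdualbyLapfr1}.

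First I would make the operators $F$ and $F^{-1}$ explicit for this $f_k$. Since $f_k(x,y)=(x-y)_+^{k-1}$ depends only on $x-y$, the operator $F$ of \eqref{eqdefdualsemigroupsF} acts on a density $q$ as $(FQ)(y)=\int_y^\infty (x-y)^{k-1} q(x)\,dx$, i.e. it is, up to the factor $\Ga(k)$, the fractional integration operator of order $k$; by the discussion around \eqref{eqfundsolFourier1} its inverse $F^{-1}$ is the corresponding fractional derivative $\frac{d^k}{d(-x)^k}$. Next I would compute the adjoint $L'$ of $L=\mp a(x)\frac{d^k}{dx^k}$ by fractional integration by parts, which turns the left fractional derivative into the right one and moves $a$ across, giving $L'=\mp\frac{d^k}{d(-x)^k}\,a(x)$ on densities. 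Substituting into \eqref{eqdualgengen} and cancelling a fractional integration against the matching fractional derivative (the property $I_k^+\,\frac{d^k}{d(-x)^k}=\mathrm{id}$) collapses $F\circ L'\circ F^{-1}$ to $\mp a(x)\frac{d^k}{d(-x)^k}$, which is exactly \eqref{eqgenstablelikekonedimdual}. This is the computation already displayed before the statement, and it is the analytic core of the argument.

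With the generators matched, I would conclude as follows. By Proposition \ref{propdualsemi}, the analytically defined dual semigroup $T_t^{D(f)}=F\circ T_t'\circ F^{-1}$ has generator $L^{D_k}$; since $L^{D_k}$ is the Feller generator of $Y_t^y$, uniqueness of the semigroup generated by a given operator on a common core identifies $T_t^{D(f)}$ with the transition semigroup of $Y$. The semigroup form of \eqref{eqdeffTdualanal}, namely $(T_t^{D(f)}f_k(x,\cdot))(y)=(T_t f_k(\cdot,y))(x)$, then reads $\E(x-Y_t^y)_+^{k-1}=\E(X_t^x-y)_+^{k-1}$, which is \eqref{eqpropdualbyLapfr1}. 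As an independent consistency check at the infinitesimal level, one verifies the generator form of \eqref{eqdeffTdualanal} directly: both $(L f_k(\cdot,y))(x)$ and $(L^{D_k}f_k(x,\cdot))(y)$ equal $\mp\Ga(k)\,a(x)\delta(x-y)$ distributionally, because applying either fractional derivative to $(x-y)_+^{k-1}$ returns $\Ga(k)\delta(x-y)$ and $a(x)\delta(x-y)=a(y)\delta(x-y)$.

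The main obstacle is that $f_k(x,y)=(x-y)_+^{k-1}$ is not bounded: it grows at infinity for $k>1$ and is singular on the diagonal for $k<1$, so Propositions \ref{propdualop} and \ref{propdualsemi}, which assume a bounded separating $f$, do not apply verbatim. I expect the real work to be in making the fractional transform $F$ and its inverse rigorous on a suitable class of measures or densities (for instance smooth and rapidly decaying) on which all the relevant fractional integrals converge, in checking that $f_k$ separates measures there, i.e. injectivity of the fractional integral transform, and in confirming that $L$ and $L^{D_k}$ generate Feller semigroups sharing a common core, so that the formal generator identity genuinely lifts to the semigroup duality. A truncation or regularization of $f_k$ followed by a limiting argument is the natural device for handling the unboundedness and the integrability of the expectations in \eqref{eqpropdualbyLapfr1}.
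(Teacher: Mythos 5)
Your proposal follows essentially the same route as the paper: the paper's entire argument for this proposition is the displayed computation $L^{D_k}=\mp I_k^+ \frac{d^k}{d(-x)^k}\, a(x)\, \frac{d^k}{d(-x)^k}=\mp a(x)\frac{d^k}{d(-x)^k}$ preceding the statement, i.e.\ exactly your identification of $F$ with $I_k^+$, of $F^{-1}$ with the right fractional derivative, and the collapse via $I_k^+\circ\frac{d^k}{d(-x)^k}=\mathrm{id}$, combined with the implicit appeal to Proposition \ref{propdualsemi} to pass from generators to semigroups. Your closing remarks on the unboundedness of $f_k$ and the need to identify the analytically defined dual semigroup with the transition semigroup of $Y$ are legitimate caveats that the paper glosses over rather than gaps in your own argument.
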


In financial terms this means that the price of the European powered call option
for the initial stock price $x$ and the strike $y$ equals the price of the European powered put option
for the initial stock price $y$ and the strike $x$ (discounting is supposed to be already included in the definition of processes $X_t^x$ and $Y_t^y$).

The most important cases are with $k\in (1,2)$, since then the corresponding Markov processes $X_t^x,Y_t^y$ are martingales,
and thus the expectation corresponds to a risk-neutral evaluation.

The case of the diffusions, that is $k=2$, is well known, see e.g. \cite{AndCarr}.

Similarly,
if $X_t^x$ is generated by
 \begin{equation}
\label{eqgenstablelikeksymonedim}
 Lg(x)=-a(x)\left|\frac{d}{dx}\right|^k,
 \end{equation}
 with $k \in (0,2]$, then the dual with respect to the function $f(x,y)=|x-y|^k$
has the generator
\[
 L^{D}=- \left|\frac{d}{dx}\right|^{-k} \left|\frac{d}{dx}\right|^k a(x) \left|\frac{d}{dx}\right|^k
 =-a(x)\left|\frac{d}{dx}\right|^k,
\]
which coincides with $L$. Consequently, $X_t^x$ is self-dual in this sense, leading to the following.

\begin{prop}
\label{propdualstraddle}
Let $X_t^x$ be the Markov process generated by
 \eqref{eqgenstablelikeksymonedim}
 with $k \in (0,2]$.
 Then
 \begin{equation}
\label{eqpropdualbyLapfr2}
 \E |y-X_t^x|^{k-1}=\E  |X_t^y-x|^{k-1}.
 \end{equation}
 \end{prop}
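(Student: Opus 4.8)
The plan is to recognize \eqref{eqpropdualbyLapfr2} as an instance of the self-duality announced in the paragraph preceding the statement, namely $f$-duality for the translation-invariant function $f(x,y)=|x-y|^{k-1}$, and to prove it by computing the dual generator through \eqref{eqdualgengen} and checking that it returns $L$. Since this $f$ depends only on $y-x$, I would work in the translation-invariant setting of Section~2.4: the operator $F$ of \eqref{eqdefdualsemigroupsFsep} is convolution against $|x|^{k-1}$, and by \eqref{eqfundsolFourier1} its inverse is the pseudo-differential operator $L_f$ with symbol $1/\hat f(p)$.

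The first substantive step is to identify $L_f$. The Fourier-analytic fact I would draw from the Appendix is that $\widehat{|x|^{k-1}}(p)=c_k\,|p|^{-k}$ for a suitable constant $c_k$ and $k\in(0,2]$, $k\neq 1$; equivalently, $|x|^{k-1}$ is, up to a constant, the fundamental solution of the fractional Laplacian $|d/dx|^k$. Hence $L_f$ has symbol proportional to $|p|^k$, so that $F^{-1}=|d/dx|^k$ and $F=|d/dx|^{-k}$. Next I would compute the formal adjoint of $L=-a(x)|d/dx|^k$: because $|d/dx|^k$ is symmetric, integration by parts gives $L'h=-|d/dx|^k\bigl(a(x)\,h\bigr)$. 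Inserting this into \eqref{eqdualgengen} reproduces exactly the cancellation displayed before the statement,
\[
L^{D}g=F\,L'\,F^{-1}g=-|d/dx|^{-k}\,|d/dx|^k\bigl(a(x)\,|d/dx|^k g\bigr)=-a(x)\,|d/dx|^k g=Lg,
\]
so $L^{D}=L$. Thus $X_t^x$ is self-dual for this $f$, and Proposition~\ref{propdualsemi} together with the defining relation \eqref{eqdefstanddual2nd} yields $\E\,f(x,X_t^y)=\E\,f(X_t^x,y)$, which is \eqref{eqpropdualbyLapfr2} after relabelling.

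The hard part is making the fractional-calculus manipulations legitimate rather than merely formal. I would have to: (i) check that $|x-y|^{k-1}$ separates measures, so that $F$ is injective and $F^{-1}$ is defined on its image --- this degenerates at $k=1$, where $f\equiv 1$ and \eqref{eqpropdualbyLapfr2} holds trivially, so I would exclude that value; (ii) justify the Fourier identity $\widehat{|x|^{k-1}}=c_k|p|^{-k}$ and the associated fundamental-solution property across the whole range $k-1\in(-1,1)$, in particular for the singular-kernel regime $k\in(0,1)$ where the fractional integrals $I_k$ of the Appendix are needed; and (iii) control domains so that the semigroup identity \eqref{eqpropdualopesemig} and its infinitesimal form \eqref{eqdualgengen} genuinely apply, in particular so that $|d/dx|^{-k}|d/dx|^k$ reduces to the identity on the image under $F$ of the relevant core. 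Once these analytic points are secured, the algebraic cancellation $L^{D}=L$ is immediate, and the case $k=2$, where $|d/dx|^2=-d^2/dx^2$ and $f(x,y)=|x-y|$, recovers the classical diffusion straddle symmetry as a consistency check.
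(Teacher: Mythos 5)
Your proposal is correct and follows essentially the same route as the paper: the paper likewise takes $F$ to be convolution with the fundamental solution $|x|^{k-1}$ of $\left|d/dx\right|^{k}$, so that $F^{-1}=\left|d/dx\right|^{k}$, and obtains self-duality from the cancellation $L^{D}=-\left|d/dx\right|^{-k}\left|d/dx\right|^{k}a(x)\left|d/dx\right|^{k}=-a(x)\left|d/dx\right|^{k}=L$ via \eqref{eqdualgengen}. The analytic caveats you list in your final paragraph are not addressed in the paper either, which presents this as a formal computation in its examples section.
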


In the financial interpretation this means the self-symmetry of powered straddle spreads.

Similarly one can analyze symmetries linking various option spreads, though the conditions for underlying Markov processes
can become rather restrictive. For instance, let us consider a symmetry related to the so-called bull put spread,
 whose premium has the form (up to a linear equivalence)
 \begin{equation}
 \label{eqebullputspreadprem}
 f_{\al, \be} (x,y)= (x-y+\al)_+-(x-y+\be)_+
 \end{equation}
 (powered version can be analyzed analogously).
 The corresponding operator $F$ from \eqref{eqdefdualsemigroupsF} can be taken as
 $F=(T_{\al}-T_{\be})I^+_2$, where $T_c f(x)=f(x+c)$ denotes the shift, so that
 \[
 F^{-1}=\sum _{m=0}^{\infty} T_{\be -\al}^m T_{\al}^{-1}\frac{d^2}{dx^2}.
 \]
 Hence, for $L=a(x)d^2/dx^2$, we get
 \[
 L^Dg(x)=a(x+\al)\sum _{m=0}^{\infty} T_{\be -\al}^m - a(x+\be)\sum _{m=1}^{\infty} T_{\be -\al}^m,
 \]
 which equals $L$ if $a(x)$ is a $(\be -\al)$-periodic function. In this case we get the duality relation
 \begin{equation}
\label{eqpropdualbyLapfr1}
 \E  f_{\al, \be} (x, X_t^y)=\E f_{\al, \be} (X_t^x,y).
 \end{equation}

\section{Stochastic monotonicity and duality}
\label{secmonotkthorder}

Let us say that a Markov process $X_t^x$
with transition probabilities $p_t(x,dz)$ is stochastically
monotone of order $k>0$, if for any $t>0$, $y\in \R$, the derivative
\begin{equation}
\label{eq00thstochmonkthorder}
\frac{\pa ^{k}}{\pa x^{k}} \E (X_t^x-y)^{k-1}_+=\frac{\pa ^k}{\pa x^k} \int_{z\ge y} (z-y)^{k-1}_+ p_t(x,dz)
\end{equation}
exists in the sense of distribution and is a positive measure (this includes the assumption that $\E(X_t^x-y)_+^{k-1}$ is finite for all $y$).
 If $k\ge 1$, then an equivalent requirement is that,
for any $t>0$, $y\in \R$, the derivative
\begin{equation}
\label{eq0thstochmonkthorder}
\frac{\pa ^{k-1}}{\pa x^{k-1}} \E (X_t^x-y)^{k-1}_+=\frac{\pa ^{k-1}}{\pa x^{k-1}} \int_{z\ge y} (z-y)^{k-1}_+ p_t(x,dz)
\end{equation}
exists in the sense of distribution and is a non-decreasing function of $x$.

\begin{remark}
(i) If $k\ge 2$, this can be reformulated avoiding generalized functions  by saying that
the derivative
\[
\frac{\pa ^{k-2}}{\pa x^{k-2}} \int_{z\ge y} (z-y)_+^{k-1} p_t(x,dz)
\]
exists as an absolutely continuous function such that its first derivative (defined almost surely) is a non-decreasing function of $x$.
(ii) One can also formulate the notion of stochastic monotonicity of arbitrary order, avoiding generalized derivatives,
 in terms of the positivity of the increments of $k$th order of the function $\E (X_t^x-y)^{k-1}_+$ (as a function of $y$).
\end{remark}

Usual stochastic monotonicity corresponds to $k=1$.
The following result extends Siegmund's theorem to monotonicity of higher orders.

\begin{theorem}
\label{thstochmonkthorder}
A real-valued Markov process $X_t^x$ with transition probabilities $p_t(x,dz)$ and semigroup $T_t$
has a Markov dual of order $k \ge 1$ if and only if it is stochastically monotone of order $k$, satisfies the limiting relation
\begin{equation}
\label{eq1thstochmonkthorder}
\frac{\pa ^{k-1}}{\pa x^{k-1}} \int_{z\ge y} \frac{(z-y)^{k-1}_+}{\Ga (k)} p_t(x,dz) \to \left\{
\begin{aligned}
& 1, \quad x\to \infty, \\
& 0, \quad x\to -\infty,
\end{aligned}
\right.
\end{equation}
for all $y$
and, if $k=1$, the function $\int_{z\ge y} p_t(x,dz) =\P(X_t^x\ge y)$ is right continuous.
\end{theorem}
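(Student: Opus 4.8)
The plan is to build the transition kernels of the candidate dual process directly out of the function $\Psi_t(x,y):=\E(X_t^x-y)_+^{k-1}$ together with the fractional calculus recorded in the appendix, and then to verify that they form an honest Markov family. Writing $f_k(x,w)=(x-w)_+^{k-1}$, the starting observation is that $\int f_k(x,w)\,q_t(y,dw)=\Ga(k)\,I_k^+[q_t(y,\cdot)](x)$ exhibits the left-hand side of \eqref{eqdefputcallsymmgen} as a fractional integral in the variable $x$ (which is at once the state variable of $Y$). Since $d^k/dx^k$ inverts $I_k^+$, any dual kernel is forced to satisfy $q_t(y,\cdot)=\Ga(k)^{-1}\,(d^k/dx^k)\,\Psi_t(\cdot,y)$, and $(d^k/dx^k)\Psi_t(\cdot,y)$ is exactly the object appearing in \eqref{eq00thstochmonkthorder}. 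I therefore adopt this formula as the \emph{definition} of the candidate kernel and check, in turn, that it is a positive measure, that it has total mass one, that it depends measurably on $y$, and that the resulting kernels satisfy Chapman--Kolmogorov.

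For the ``if'' direction, positivity of $q_t(y,\cdot)$ is precisely the hypothesis of stochastic monotonicity of order $k$ in the form \eqref{eq00thstochmonkthorder}. To read off the total mass I use the composition rule $\frac{\pa^{k-1}}{\pa x^{k-1}}I_k^+=I_1^+$ from the appendix, which gives
\[
\frac{\pa^{k-1}}{\pa x^{k-1}}\frac{\Psi_t(x,y)}{\Ga(k)}=I_1^+[q_t(y,\cdot)](x)=q_t(y,(-\infty,x]),
\]
so the two sides of \eqref{eq1thstochmonkthorder} force $q_t(y,(-\infty,x])\to0$ as $x\to-\infty$ and total mass one as $x\to\infty$, while the monotonicity formulation \eqref{eq0thstochmonkthorder} simultaneously makes $x\mapsto q_t(y,(-\infty,x])$ non-decreasing, i.e.\ a genuine distribution function. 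The borderline case $k=1$ is where the fractional derivative degenerates to the indicator pairing and the displayed primitive is the bare survival function $\P(X_t^x\ge y)=\P(Y_t^y\le x)$; here right continuity in $x$ is the extra input needed (and assumed) to make this a bona fide c.d.f., exactly as in Siegmund's original theorem.

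The substantive step is Chapman--Kolmogorov. Because $f_k$ is unbounded for $k>1$, I cannot simply quote Proposition \ref{propdualsemi}; instead I compute $\int q_s(y,dw)\int q_t(w,dv)\,f_k(x,v)$ directly, unfolding it by the duality \eqref{eqdefputcallsymmgen} twice and then by Chapman--Kolmogorov for the given process $X$, to obtain $\int f_k(x,v)\,q_{s+t}(y,dv)$ for every $x$. Since $\{f_k(x,\cdot)\}_{x\in\R}$ separates finite measures---again because $I_k^+$ is injective with left inverse $d^k/dx^k$---this forces $\int q_s(y,dw)\,q_t(w,\cdot)=q_{s+t}(y,\cdot)$, after which a Markov process $Y_t^y$ with these transition kernels exists. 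I expect the genuine obstacle to sit here: justifying the two Fubini interchanges for the unbounded kernel $(z-w)_+^{k-1}$ (this is where the finiteness clause built into the definition of $k$th order monotonicity is consumed), and verifying that the distributionally defined primitives are regular enough for the tail conditions \eqref{eq1thstochmonkthorder} to be evaluated pointwise.

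The ``only if'' direction runs the same identities in reverse: given a Markov dual $Y_t^y$, the relation $\Psi_t(\cdot,y)=\Ga(k)\,I_k^+[q_t(y,\cdot)]$ shows that $(d^k/dx^k)\Psi_t(\cdot,y)=\Ga(k)\,q_t(y,\cdot)$ is a positive measure, giving monotonicity of order $k$; that $\frac{\pa^{k-1}}{\pa x^{k-1}}\Psi_t(\cdot,y)/\Ga(k)=q_t(y,(-\infty,x])$ supplies the limits in \eqref{eq1thstochmonkthorder}; and that, for $k=1$, the identity $\P(X_t^x\ge y)=\P(Y_t^y\le x)$ inherits right continuity from the c.d.f.\ of $Y_t^y$.
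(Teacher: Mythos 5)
Your construction is essentially the paper's own: both arguments identify the forced candidate kernel $q_t(y,dx)=\Ga(k)^{-1}\,\pa_x^k\,\E(X_t^x-y)_+^{k-1}$ by inverting the fractional integration operator $I_k^+$, read off positivity of that kernel as exactly the hypothesis of $k$th order stochastic monotonicity, obtain total mass one from the tail limits \eqref{eq1thstochmonkthorder} via the $(k-1)$st primitive being the distribution function $q_t(y,(-\infty,x])$, and isolate $k=1$ as Siegmund's case where right continuity is the extra ingredient. The one point where you genuinely diverge is the semigroup property: the paper gets the Chapman--Kolmogorov equation by invoking Proposition \ref{propdualsemi}, i.e.\ from the intertwining $T_t^{D_k}=I_k^+\circ T_t'\circ (I_k^+)^{-1}$, whereas you verify it by hand through a double unfolding of the duality identity, Fubini, and the separating property of $\{(x-\cdot)_+^{k-1}\}_{x\in\R}$. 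Your observation that Proposition \ref{propdualsemi} is stated for \emph{bounded} separating functions, while $(x-y)_+^{k-1}$ is unbounded for $k>1$, is a fair criticism of the paper's shortcut, and your direct verification --- with the Fubini interchanges financed by the finiteness clause built into the definition of $k$th order monotonicity --- is the honest way to close that gap; beyond this the two proofs deliver the same content. (A purely notational caveat: with the paper's sign conventions it is $\frac{d^k}{d(-x)^k}$ that inverts $I_k^+$ and $\frac{d^k}{dx^k}$ that inverts $I_k^-$, so your pairing of $I_k^+$ with $d^k/dx^k$ should be read with the orientation fixed; the kernel you end up with coincides with \eqref{eq00thstochmonkthorder}, so nothing substantive is affected.)
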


\begin{proof}
Let us first analyze Siegmund's case $k=1$ from our point of view.
Then the mapping $F: \MC(\R)\to B(\R)$ given by the corresponding equation \eqref{eqdefdualsemigroupsF} becomes the usual integration, that is
\[
FQ (y)=\int \theta (x-y) Q(dx)=\int_{x\ge y} Q(dx),
\]
whose image
consists of the left continuous functions (because we defined $\theta$ to be right continuous) of uniformly bounded variation (the total variation of $FQ$ being equal to the total variation norm
of $Q$) tending to zero at $+\infty$. By \eqref{eqpropdualopesemig}, for a $g= FQ$ with a finite measure $Q=-dg$, the corresponding dual semigroup becomes
\[
T_t^Dg(y) =F \circ T_t' \circ F^{-1} g(y)=-\int \left( \int_{z\ge y}  p_t(w, dz)\right) dg (w).
\]
Integrating by parts (notice that here it is crucial that the functions $\int_{z\ge y} p_t(x,dz)$ and $g$ are right continuous and
left continuous respectively, see \eqref{eqintpartsfrac1} with $k=1$), this rewrites as
\begin{equation}
\label{eq2thstochmonkthorder}
T_t^Dg(y) = \int g(w) d_w \int_{z\ge y}  p_t(w, dz),
\end{equation}
where $d_w \int_{z\ge y}  p_t(w, dz)=d_w \P(X_t^w\ge y)$ is the Stiltjes measure of the increasing function $\P(X_t^w\ge y)$.
Equation \eqref{eq2thstochmonkthorder} defines an integral operator with a positive stochastic kernel, which, taking into account
assumption \eqref{eq1thstochmonkthorder}, is in fact a probability kernel. Hence this operator extends naturally to a positivity preserving
conservative contraction in $B(\R)$ thus defining a Markov transition operator. Finally the Markov property (which is now equivalent to
 the Chapman-Kolmogorov equation or to the semigroup property of the operators $T_t^{D(f)}$) follows from Proposition \ref{propdualsemi}.

Now let $k>1$. Then the corresponding operator $F$, given by \eqref{eqdefdualsemigroupsF}, becomes (up to a constant multiplier) the
integration operator $I_k^+$ (see \eqref{eqdeffracint1}
from Appendix) and the corresponding function $f$ specifying duality is $(x-y)_+^{k-1}$.  Assuming $g$ belongs to the image $\IC_k^+$ of
$I_k^+$ (see  discussion after \eqref{eqdeffracint1}),
so that
\[
(I_k^+)^{-1} g(y)=\frac{d^k g(w)}{d(-w)^k}=Q(dw)
\]
is a measure from $\MC_k^+$,
one can integrate by parts (using \eqref{eqintpartsfrac1})
in the formula
\[
T_t^{D_k}g(y) =I_k^+ \circ T_t' \circ (I_k^+)^{-1} g(y)
\]
\[
= \int_{z\ge y} \frac{(z-y)_+^{k-1}}{\Ga (k)} \left(
 \int_{w\in \R} p_t(w, dz)\frac{d^k g(w)}{d(-w)^k} \right)
=\int \left( \int_{z\ge y} \frac{(z-y)_+^{k-1}}{\Ga (k)} p_t(w, dz)\right)
\frac{d^k g(w)}{d(-w)^k}
\]
(where the corresponding dual operators are marked by the subscript $D_k$)
leading to
\begin{equation}
\label{eq3thstochmonkthorder}
T_t^{D_k}g(y) = \int g(w) \frac{\pa ^k}{\pa w^k} \left( \int_{z\ge y} \frac{(z-y)_+^{k-1}}{\Ga (k)} p_t(w, dz)\right).
\end{equation}
This formula can be used to define a natural extension of \eqref{eq2thstochmonkthorder} (initially defined as a mapping $\IC_k^+ \to B(\R)$) as a positive integral operator. The proof is now completed as in case $k=1$.
\end{proof}

For $k\in (0,1)$ we assume a bit more regularity on the initial process $X_t^x$, which, on the one hand, is enough for most of the application and,
on the other hand, allows one to avoid rather subtle measure-theoretic problems. Consequently, the next result is obtained by literally the same proof
 as for $k>1$ above.

\begin{theorem}
\label{thstochmonkthorder}
Suppose a real-valued Markov process $X_t^x$ has bounded transition probability densities $p_t(x,z)$ for $t>0$. Then $X_t^x$
has a Markov dual of order $k \in (0,1)$ if and only if it is stochastically monotone of order $k$ and
measure \eqref{eq00thstochmonkthorder} has the total mass one.
\end{theorem}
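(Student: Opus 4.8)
The plan is to run the computation carried out for $k>1$ verbatim, the bounded-density hypothesis being precisely what keeps each step meaningful when the exponent $k-1$ now lies in $(-1,0)$. As before, the measure-separating function is $f(x,y)=(x-y)_+^{k-1}$, and the operator $F$ of \eqref{eqdefdualsemigroupsF} coincides, up to a constant multiple, with the right-sided fractional integration operator $I_k^+$. For $k\in(0,1)$ its kernel $(z-y)_+^{k-1}$ carries an \emph{integrable} singularity, so that $I_k^+$ is a well-defined operator on the relevant class of functions and its inverse is the fractional derivative $(I_k^+)^{-1}=d^k/d(-w)^k$ of order $k<1$. By Proposition \ref{propdualsemi}, whenever a dual semigroup exists it is forced to be $T_t^{D_k}=I_k^+\circ T_t'\circ (I_k^+)^{-1}$, so the whole problem reduces to deciding when this analytic object is a genuine Markov semigroup.

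First I would reproduce the manipulation leading to \eqref{eq3thstochmonkthorder}. Writing $Q=(I_k^+)^{-1}g$ for $g$ in the image $\IC_k^+$, applying $T_t'$ and then $I_k^+$, and integrating by parts via \eqref{eqintpartsfrac1}, one arrives at the same representation
\[
T_t^{D_k}g(y)=\int g(w)\,\frac{\pa^k}{\pa w^k}\Big(\int_{z\ge y}\frac{(z-y)_+^{k-1}}{\Ga(k)}\,p_t(w,dz)\Big),
\]
so the candidate dual transition kernel from $y$ to $w$ is the measure
\[
p_t^{D_k}(y,dw)=\frac{\pa^k}{\pa w^k}\Big(\int_{z\ge y}\frac{(z-y)_+^{k-1}}{\Ga(k)}\,p_t(w,dz)\Big)\,dw .
\]
Here the boundedness of the densities $p_t(w,z)$ is what guarantees that the inner fractional integral is a well-behaved function of $w$ and that the integration by parts produces no boundary contributions.

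It then remains to read off the two directions. The kernel $p_t^{D_k}(y,\cdot)$ is a \emph{positive} measure for every $y$ exactly when $\frac{\pa^k}{\pa x^k}\E(X_t^x-y)_+^{k-1}$ is a positive measure, i.e. exactly when $X_t^x$ is stochastically monotone of order $k$ in the sense of \eqref{eq00thstochmonkthorder}; and its total mass equals one for every $y$ precisely when the measure \eqref{eq00thstochmonkthorder} has total mass one. This normalization plays the role that the limiting relation \eqref{eq1thstochmonkthorder} played for $k\ge1$: there the total mass was recovered as the difference of the $(k-1)$st antiderivative of the kernel at $\pm\infty$, whereas for $k\in(0,1)$ no such antiderivative is available and the mass must be imposed directly. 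Under these two hypotheses $p_t^{D_k}$ is a bona fide probability kernel, hence a positivity-preserving conservative contraction on $B(\R)$; Proposition \ref{propdualsemi} then supplies the semigroup (Chapman--Kolmogorov) property, so the associated process $Y_t^y$ is the required Markov dual of order $k$. Conversely, if a Markov dual of order $k$ exists, Proposition \ref{propdualop} identifies its kernel with $p_t^{D_k}$, which must then be positive and of total mass one, giving both necessity statements.

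The main obstacle is exactly the one the regularity hypothesis is designed to remove: verifying that the distributional object $\frac{\pa^k}{\pa w^k}(\cdots)$ is an honest locally finite (positive) measure in $w$, rather than a more singular distribution, and that the fractional integration-by-parts identity \eqref{eqintpartsfrac1} is valid in the range $k\in(0,1)$ without boundary terms. These are the subtle measure-theoretic difficulties that disappear once $p_t(w,z)$ is assumed bounded, after which the remaining bookkeeping is word-for-word the same as in the case $k>1$.
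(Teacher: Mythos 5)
Your proposal is correct and follows essentially the same route as the paper, which simply declares the $k\in(0,1)$ case to be ``literally the same proof as for $k>1$'': represent $T_t^{D_k}=I_k^+\circ T_t'\circ (I_k^+)^{-1}$, integrate by parts via \eqref{eqintpartsfrac1}, and read off positivity (stochastic monotonicity of order $k$) and conservativity (total mass one, replacing the limiting relation \eqref{eq1thstochmonkthorder}) of the resulting kernel, with the bounded-density hypothesis supplying the regularity needed to make the fractional integration well defined in this range (as in \eqref{eqdeffracint2}). Your added remarks on why the mass-one condition must be imposed directly for $k<1$ are a sensible gloss on what the paper leaves implicit.
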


\section{Characterization of duality in terms of generators}
\label{secmainres}

In this section we obtain our main results. Namely,
using the formula for $f$-dual generators \eqref{eqdualgengen} we explicitly calculate the dual for an arbitrary
Feller process. Let us first consider diffusions. Moreover, for clarity, we shall consider separately the cases of
integer and real $k$.

For an integer $k$, we shall denote by $C^k$ the space of $k$ times continuously differential functions on $\R$ (with bounded derivatives).
In what follows we are not aiming at the weakest possible assumption on the coefficients $a,b$, but assume as much regularity as
needed to get the most transparent formulas for dual operators. Also the convenient assumption of boundedness can be relaxed
by using the theory of diffusions with unbounded coefficients.

Let us consider a Feller diffusion $X_t^x$ with the Feller semigroup $T_t$ is generated by the operator
\begin{equation}
\label{eq1thstochmonkthorderdif}
Lg(x)=a(x) \frac{d^2}{dx^2}+b(x) \frac{d}{dx}
\end{equation}
with a nonnegative function $a(x)$.

The following fact is  a particular case of a more general multi-dimensional result from  \cite{KoRui}. If $a, b \in C^1$, then
the Markov dual process $Y_t^x$ of order $k=1$ exists and is a diffusion generated by the operator
\[
L^{D_k}g(x)=a(x) \frac{d^2}{dx^2}+[a'(x)-b(x)] \frac{d}{dx}.
\]

This allows us to exclude $k=1$ from the following arguments.

\begin{theorem}
\label{thstochmonkthorderdifint}
Suppose $k>1$ is an integer and $a, b \in C^k$.
The diffusion $X_t^x$ generated by \eqref{eq3thstochmonkthorderdif} is stochastically monotone of order $k$ if and only if the function
\[
\om_y(x)=\frac{\pa^{k-1}}{\pa x^{k-1}} \left[\frac{(x-y)^{k-2}}{\Ga (k-1)} b(x)+\frac{(x-y)^{k-3}}{\Ga (k-2)} a(x) \1_{k\neq 2}\right]
\]
is a non-decreasing function of $x\ge y$ for any $y$.
If this is the case and additionally
\begin{equation}
\label{eq2thstochmonkthorderdif}
\lim_{x\to +\infty} \om_y(x) =-(k-1)b'(y)-\frac12 (k-1)(k-2) a''(y)
\end{equation}
for all $y$, then the $k$th order Markov dual process $Y_t^y$ exists and is generated by the operator
\[
L^{D_k}g(y)=a(y) \frac{d^2}{dy^2}-[b(y)+(k-2)a'(y)] \frac{d}{dy}
\]
\begin{equation}
\label{eq3thstochmonkthorderdif}
+\int_y^{\infty} (g(x)-g(y)) \frac{\pa ^k}{\pa x^k}
\left[\frac{(x-y)^{k-2}}{\Ga (k-1)} b(x)+\frac{(x-y)^{k-3}}{\Ga (k-2)} a(x) \1_{k\neq 2}\right]\, dx.
\end{equation}
\end{theorem}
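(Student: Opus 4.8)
The plan is to compute the dual generator directly from the master formula $L^{D_k} = F\circ L'\circ F^{-1}$ of Proposition \ref{propdualsemi}(ii), specialized to $f_k(x,y)=(x-y)_+^{k-1}$. Here $F = I_k^+$ and $F^{-1} = d^k/d(-x)^k$ as in the proof of Theorem \ref{thstochmonkthorder}, and the formal adjoint of $L$ in \eqref{eq1thstochmonkthorderdif} is $L'u = (au)'' - (bu)'$. Thus the whole theorem reduces to evaluating
\[
L^{D_k}g = I_k^+\Big[\big(a\, \tfrac{d^k}{d(-x)^k}g\big)'' - \big(b\, \tfrac{d^k}{d(-x)^k}g\big)'\Big]
\]
and reading off both its positivity condition and its conservativity condition.

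First I would carry out this composition using the fractional integration-by-parts identities of the Appendix (the relations used before \eqref{eqintpartsfrac0} and \eqref{eqintpartsfrac1}) to commute the two differentiations past $I_k^+$. Because $a,b$ are variable, $I_k^+$ does not commute with multiplication by them, so the Leibniz-type expansion produces a purely local leading part together with a genuinely non-local remainder. I expect the local part to assemble into the second-order differential operator $a(y)\,d^2/dy^2 - [b(y)+(k-2)a'(y)]\,d/dy$ --- a computation I would sanity-check against the known $k=1$ dual $a g'' + (a'-b)g'$, into which the drift coefficient $-[b+(k-2)a']$ correctly specializes --- while the remainder is the integral operator against the kernel $\partial_x^k[\,\cdot\,]\,dx$ appearing in \eqref{eq3thstochmonkthorderdif}. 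Rewriting this remainder in the compensated jump form $\int_y^\infty (g(x)-g(y))\,\nu_y(dx)$ requires subtracting a multiple of $g(y)$, whose coefficient is exactly the total mass of the kernel.

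Next I would extract the two scalar conditions. Since the kernel density is $\partial_x^k[\cdots] = \partial_x\,\om_y(x)$, positivity of the L\'evy measure $\nu_y$ is equivalent to $\om_y$ being non-decreasing on $x\ge y$; this is precisely stochastic monotonicity of order $k$ as defined in \eqref{eq00thstochmonkthorder}, giving the stated equivalence (positivity of $\nu_y$ makes $L^{D_k}$ a L\'evy-type generator in one direction, and Theorem \ref{thstochmonkthorder} forces $\nu_y\ge 0$ in the other). The total mass $\int_y^\infty\nu_y(dx)=\om_y(\infty)-\om_y(y^+)$ controls conservativity, so the limiting requirement \eqref{eq1thstochmonkthorder} of the extended Siegmund theorem becomes the explicit normalization \eqref{eq2thstochmonkthorderdif} on $\om_y(\infty)$. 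With the dual operator shown to be a conservative L\'evy-type generator, Theorem \ref{thstochmonkthorder} delivers the genuine Markov dual $Y_t^y$ generated by \eqref{eq3thstochmonkthorderdif}.

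The main obstacle is the bookkeeping in the composition $I_k^+\circ L'\circ d^k/d(-x)^k$: tracking every boundary and local term produced when the variable coefficients $a,b$ are carried through the fractional operators, so that the diffusion coefficients emerge exactly as $a$ and $-[b+(k-2)a']$ and the separate $\1_{k\neq 2}$ term (absent when $k=2$, since then $I_{k-2}^+$ degenerates) lands correctly inside both $\om_y$ and the kernel. The indicator signals that the $a$-contribution to the non-local part disappears in the pure-diffusion-dual case $k=2$, and keeping the Gamma-factor normalizations consistent throughout is where errors are easiest to make.
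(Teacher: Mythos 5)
Your proposal follows essentially the same route as the paper: the author likewise computes $L^{D_k}=I_k^+\circ L'\circ \frac{d^k}{d(-x)^k}$, treating the drift and diffusion parts separately, integrates by parts ($k-2$ times and then twice more) to split off the local part $a(y)g''-[b+(k-2)a']g'$ plus the integral against $\partial_x^k[\cdots]\,dx$, and then reads off positivity of that kernel as monotonicity of $\om_y$ and condition \eqref{eq2thstochmonkthorderdif} as the vanishing of the residual $g(y)$-coefficient needed for conservativity. The structure, the boundary-term bookkeeping, and the role of $\1_{k\neq 2}$ are all as you describe, so the plan is sound and matches the paper's proof.
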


 \begin{remark}
If one has the inequality $\le $ rather than the equality in \eqref{eq2thstochmonkthorderdif},
then the dual $Y_t^y$ exists as a sub-Markov process.
\end{remark}

\begin{proof}
Let us do the calculations separately for the drift and diffusion parts of $L$ starting with the drift part.
By \eqref{eqdualgengen} and \eqref{eqintpartsfrac0} we have
\begin{equation}
\label{eq4thstochmonkthorderdif}
\left(b (y)\frac{d}{dy}\right)^{D_k}g(y)=-I_k^+ \left(\frac{d}{dy} \circ b(y)\right) \circ \frac{d^k}{d(-y)^k}g(y)
= \int_y^{\infty} \frac{(x-y)^{k-2}}{\Ga (k-1)} b(x) \frac{d^k}{d(-x)^k} g (x) \, dx.
\end{equation}
Integrating by parts first $k-2$ times (where boundary terms cancel) and then two times more yields
\[
\left(b (y)\frac{d}{dy}\right)^{D_k}g(y)
= \int_y^{\infty} \frac{d^{k-2}}{dx^{k-2}}\left[\frac{(x-y)^{k-2}}{\Ga (k-1)} b(x)\right] \frac{d^2 g(x)}{dx^2} \, dx
\]
\[
=-b(y) g'(y) +(k-1) b'(y)g(y)+\int_y^{\infty} g(x)\frac{\pa^k}{\pa x^k} \left[\frac{(x-y)^{k-2}}{\Ga (k-1)} b(x)\right] \, dx,
\]
which rewrites as
\[
\left(b (y)\frac{d}{dy}\right)^{D_k}g(y)=-b(y) g'(y) +\int_y^{\infty} (g(x)-g(y)) \frac{\pa^k}{\pa x^k} \left[\frac{(x-y)^{k-2}}{\Ga (k-1)} b(x)\right]\, dx
\]
\[
+g(y)\left[(k-1)b'(y)+\lim_{x\to \infty}\frac{\pa ^{k-1}}{\pa x^{k-1}} \left[\frac{(x-y)^{k-2}}{\Ga (k-1)} b(x)\right]\right],
\]
if the last limit is finite.

Similarly, if $k\neq 2$, we have
\begin{equation}
\label{eq5thstochmonkthorderdif}
\left(a (y)\frac{d^2}{dy^2}\right)^{D_k}g(y)=I_k^+ \left(\frac{d^2}{dy^2} \circ a(y)\right) \circ \frac{d^k}{d(-y)^k}g(y)
= \int_y^{\infty} \frac{(x-y)^{k-3}}{\Ga (k-2)} a(x) \frac{d^k}{d(-x)^k} g (x) \, dx,
\end{equation}
which by the integration by parts rewrites as
\[
-\int_y^{\infty} \frac{d^{k-3}}{dx^{k-3}}\left[\frac{(x-y)^{k-3}}{\Ga (k-2)} a(x)\right] \frac{d^3g(x)}{dx^3} \, dx
\]
\[
=a(y)g''(y)-(k-2) a'(y) g'(y) +\frac12 (k-1)(k-2) a''(y)g(y)+\int_y^{\infty} g(x)\frac{\pa^k}{\pa x^k} \left[\frac{(x-y)^{k-3}}{\Ga (k-2)} a(x)\right] \, dx,
\]
or finally as
\[
=a(y)g''(y)-(k-2) a'(y) g'(y) +\int_y^{\infty} (g(x)-g(y)) \frac{\pa^k}{\pa x^k} A_k(x,y)\, dx
\]
\[
+g(y) \left[ \frac12 (k-1)(k-2) a''(y) + \lim_{x\to \infty}\frac{\pa^{k-1}}{\pa x_{k-1}} \left[\frac{(x-y)^{k-3}}{\Ga (k-2)} a(x)\right]\right],
\]
if the limit on the r.h.s. exists. Summing up the formulas for the dual operators to the drift and diffusive parts
 leads to \eqref{eq3thstochmonkthorderdif} under the condition \eqref{eq2thstochmonkthorderdif}.
\end{proof}

The extension to non-integer $k$ goes as follows.

\begin{theorem}
\label{thstochmonkthorderdifre}
Suppose $k>0$ and $k\neq 1,2$, and $a \in C^{[k]+2}(\R)$, $b \in C^{[k]+1}(\R)$ (here $[k]$ denotes the integer part of a number $k$).
Let us define, for $x\ge y$,  the functions
\[
B_k(x,y)=\frac{1}{\Ga (k-1)} [b(x)-b(y)-b'(y)(x-y)](x-y)^{k-2},
\]
\[
A_k(x,y)=\frac{1}{\Ga (k-2)} [a(x)-a(y)-a'(y)(x-y)-\frac12 a''(y)(x-y)^2](x-y)^{k-3}.
\]
The diffusion $X_t^x$ generated by \eqref{eq3thstochmonkthorderdif}
is stochastically monotone of order $k$ if and only if the function
\[
\frac{\pa^{k-1}}{\pa x^{k-1}} (B_k(x,y)+A_k(x,y))
\]
is a non-decreasing function of $x\ge y$ for any $y$.
If this is the case and additionally
\begin{equation}
\label{eq2thstochmonkthorderdifre}
\lim_{x\to +\infty} \frac{\pa^{k-1}}{\pa x^{k-1}} (B_k(x,y)+A_k(x,y)) =-(k-1)b'(y)-\frac12 (k-1)(k-2) a''(y)
\end{equation}
for all $y$, then the $k$th order Markov dual process $Y_t^y$ exists and is generated by the operator
\[
L^{D_k}g(y)=a(y) \frac{d^2}{dy^2}-[b(y)+(k-2)a'(y)] \frac{d}{dy}
\]
\begin{equation}
\label{eq3thstochmonkthorderdifre}
+\int_y^{\infty} (g(x)-g(y)) \frac{\pa ^k}{\pa x^k} (B_k(x,y)+A_k(x,y))\, dx.
\end{equation}
\end{theorem}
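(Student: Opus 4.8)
The plan is to mirror the integer-$k$ proof (Theorem \ref{thstochmonkthorderdifint}), using the same master formula $L^{D_k}=F\circ L'\circ F^{-1}$ with $F^{-1}=d^k/d(-y)^k$, but now handling the fractional integration-by-parts that the non-integer case forces upon us. First I would split $L=a(x)\,d^2/dx^2+b(x)\,d/dx$ into drift and diffusion pieces and compute each dual separately, as before. For the drift part, applying \eqref{eqdualgengen} and the analogue of \eqref{eq4thstochmonkthorderdif} yields
\[
\left(b(y)\frac{d}{dy}\right)^{D_k}g(y)=\int_y^{\infty}\frac{(x-y)^{k-2}}{\Ga(k-1)}\,b(x)\,\frac{d^k}{d(-x)^k}g(x)\,dx,
\]
and similarly for the diffusion part with kernel $(x-y)^{k-3}a(x)/\Ga(k-2)$. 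The crux is that $(x-y)^{k-2}$ is no longer a polynomial, so we cannot integrate by parts an integer number of times and have all boundary terms cancel. This is exactly why the \emph{regularized} kernels $B_k$ and $A_k$ are introduced: subtracting the Taylor polynomial of $b$ (resp.\ $a$) at $y$ up to order $1$ (resp.\ order $2$) produces a kernel that decays fast enough near the diagonal $x=y$ for the fractional derivative $\pa^k/\pa x^k$ to be integrable there.

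The key step is therefore to justify the integration-by-parts identity that converts $\int_y^\infty K(x,y)\,\frac{d^k}{d(-x)^k}g\,dx$ into $\int_y^\infty g(x)\,\frac{\pa^k}{\pa x^k}K(x,y)\,dx$ plus boundary contributions, using the fractional integration-by-parts formula from the Appendix (the analogue of \eqref{eqintpartsfrac0}--\eqref{eqintpartsfrac1}). I would argue that replacing the raw kernel by $B_k+A_k$ changes the result only by contributions supported at the Taylor-polynomial level: the subtracted terms $b(y)+b'(y)(x-y)$ and $a(y)+a'(y)(x-y)+\tfrac12 a''(y)(x-y)^2$ are polynomials in $x$ of low degree times $(x-y)^{k-2}$ (resp.\ $(x-y)^{k-3}$), and their fractional derivatives of order $k$ reproduce precisely the local differential operators $-[b(y)+(k-2)a'(y)]\,d/dy$ and $a(y)\,d^2/dy^2$ together with the $g(y)$-multipliers $(k-1)b'(y)$ and $\tfrac12(k-1)(k-2)a''(y)$ seen in \eqref{eq2thstochmonkthorderdifre}. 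In effect the Taylor subtraction is what separates the \emph{local} (diffusion) part of the dual generator from its \emph{nonlocal} jump part, with $\int_y^\infty (g(x)-g(y))\,\frac{\pa^k}{\pa x^k}(B_k+A_k)\,dx$ being the jump part.

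Once the operator identity \eqref{eq3thstochmonkthorderdifre} is established formally, the remaining step is the equivalence and well-definedness claim. Here I would invoke Theorem \ref{thstochmonkthorder}: stochastic monotonicity of order $k$ is equivalent to the kernel $\frac{\pa^k}{\pa x^k}(B_k+A_k)$ being a positive measure in $x$ for each $y$, which is exactly the statement that $\frac{\pa^{k-1}}{\pa x^{k-1}}(B_k+A_k)$ is non-decreasing in $x\ge y$. The limiting condition \eqref{eq2thstochmonkthorderdifre} is then what guarantees the total mass is correct (conservativeness), so that $L^{D_k}$ has the form ``second-order local part plus a genuine positive-kernel integral operator'' and therefore generates a Markov (not merely sub-Markov) process; the inequality $\le$ in \eqref{eq2thstochmonkthorderdifre} yields only a defective, sub-Markov dual.

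\textbf{The main obstacle} I anticipate is rigorously controlling the boundary behavior of the fractional integration by parts at the endpoint $x=y$, where $(x-y)^{k-3}$ may blow up for $k<3$: one must verify that the Taylor regularization in $A_k$ and $B_k$ is precisely of the right order to make every omitted boundary term vanish while capturing the surviving local terms exactly. This is a delicate matching of singularity orders rather than a deep new idea, but it is where the non-integer case genuinely differs from the integer computation, since there the binomial-type cancellations are automatic.
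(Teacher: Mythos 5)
Your proposal follows essentially the same route as the paper's proof: split $L$ into drift and diffusion parts, use the Taylor-polynomial subtraction built into $B_k$ and $A_k$ to separate the local terms (recovered via $\frac{d^k}{dx^k}\frac{(x-y)_+^{k-1}}{\Ga(k)}=\de_y(x)$ and its analogues) from a nonlocal kernel with no atom at $x=y$, apply the fractional integration-by-parts formulas from the Appendix, and then conclude via the general monotonicity--duality theorem exactly as in the integer case. The obstacle you flag (matching singularity orders near $x=y$, especially for small $k$) is precisely what the paper handles with its separate modification for $k\in(0,1)$ and $k<2$, so the proposal is sound.
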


\begin{proof}
If $k>1$, formula \eqref{eq4thstochmonkthorderdif} remains valid, but we rewrite it now as
\[
\int_y^{\infty} \left[ \frac{(x-y)^{k-2}}{\Ga (k-1)} (b(y)+b'(y)(x-y))+ B_k(x,y)\right] \frac{d^k}{d(-x)^k} g (x) \, dx.
\]
Using fractional integration-by-parts formulas \eqref{eqintpartsfrac1} and \eqref{eqintpartsfrac00} yields
\begin{equation}
\label{eq4thstochmonkthorderdifre}
\left(b (y)\frac{d}{dy}\right)^{D_k}g(y)=-b(y) g'(y) +(k-1) b'(y)g(y)+\int_y^{\infty} g(x)\frac{\pa^k}{\pa x^k} B_k(x,y)\, dx.
\end{equation}
Notice that the measure in the last integral does not have an atom at $y$ (in fact the function $B_k(x,y)$ was introduced specifically
in order to be able to single out such a measure, corresponding boundary terms being written explicitly). This can also be rewritten as
\[
\left(b (y)\frac{d}{dy}\right)^{D_k}g(y)=-b(y) g'(y) +\int_y^{\infty} (g(x)-g(y)) \frac{\pa^k}{\pa x^k} B_k(x,y)\, dx
\]
\begin{equation}
\label{eq5thstochmonkthorderdifre}
+g(y)\left[(k-1)b'(y)+\lim_{x\to \infty}\frac{d^{k-1}}{dx^{k-1}} B_k(x,y)\right],
\end{equation}
if the last limit is finite.

If $k\in (0,1)$, a small modification is required. Namely, in this case, instead of \eqref{eq4thstochmonkthorderdif}, we get
\[
\left(b (y)\frac{d}{dy}\right)^{D_k}g(y)
= -\int_y^{\infty} \frac{(x-y)^{k-1}}{\Ga (k)} \frac{\pa}{\pa x} \left(b(x) \frac{d^k}{d(-x)^k} g (x)\right) \, dx.
\]
Before integration by parts we have to add and subtract $b(y)$ from $b(x)$ leading to
\[
\left(b (y)\frac{d}{dy}\right)^{D_k}g(y)=-b(y) g'(y)
-\int_y^{\infty} \frac{(x-y)^{k-1}}{\Ga (k)} \frac{\pa}{\pa x} \left[(b(x)-b(y)) \frac{d^k}{d(-x)^k} g (x)\right] \, dx.
\]
Now we can integrate by parts yielding
\[
\left(b (y)\frac{d}{dy}\right)^{D_k}g(y)=-b(y) g'(y)
+\int_y^{\infty} \frac{(x-y)^{k-2}}{\Ga (k-1)} \left[(b(x)-b(y)) \frac{d^k}{d(-x)^k} g (x)\right] \, dx,
\]
which again turns to \eqref{eq4thstochmonkthorderdifre} and consequently to \eqref{eq5thstochmonkthorderdifre}.

Similarly \eqref{eq5thstochmonkthorderdif} remains valid for $k>2$, and we rewrite it now as
\[
\int_y^{\infty} \left[\frac{(x-y)^{k-3}}{\Ga (k-2)} (a(y)+a'(y)(x-y)+\frac12 a''(y)(x-y)^2) + A_k(x,y)\right] \frac{d^k}{d(-x)^k} g (x) \, dx,
\]
or using fractional integration-by-parts formulas \eqref{eqintpartsfrac1} and \eqref{eqintpartsfrac00} as
\[
a(y)g''(y)-(k-2) a'(y) g'(y) +\frac12 (k-1)(k-2) a''(y)g(y)+\int_y^{\infty} g(x)\frac{\pa^k}{\pa x^k} A_k(x,y) \, dx
\]
\[
=a(y)g''(y)-(k-2) a'(y) g'(y) +\int_y^{\infty} (g(x)-g(y)) \frac{\pa^k}{\pa x^k} A_k(x,y)\, dx
\]
\[
+g(y) \left[ \frac12 (k-1)(k-2) a''(y) + \lim_{x\to \infty}\frac{\pa^{k-1}}{\pa x_{k-1}} A_k(x,y)\right],
\]
if the last limit is finite. The modifications needed for $k<2$ are similar to those used above when dealing with the drift term.
 The remaining part is the same as in Theorem \ref{thstochmonkthorderdifint}.
\end{proof}

Let us now turn to processes with jumps starting with the generator
\begin{equation}
\label{eqjumpgen}
Lg(x)=\int (g(y)-g(x)) \nu (x, dy)
\end{equation}
with a finite stochastic kernel $\nu (x,dy)$.
Since
\[
L'\phi (dy)=\int_{z\in \R} \phi (dy) \nu (y, dz)-\phi (dy) \int_{z\in \R} \nu (y, dz)
\]
we find, for the dual of order $k>0$, the expression
\[
L^{D_k} g (y) =I_k^+ \circ  L' \circ \frac{d^k}{d(-y)^k} g(y)
\]
\[
=\int_y^{\infty} \frac{(z-y)^{k-1}}{\Ga (k)} \left[\int dw \frac{d^k}{d(-w)^k} g(w) \nu (w, dz) -\frac{d^k}{d(-z)^k} g(z) dz \int \nu (z, dw)\right],
 \]
 \[
 =\int \int \frac{d^k}{d(-z)^k} g(z) \nu (z, dw)
  \left[ \1_{w\ge y} \frac{(w-y)^{k-1}}{\Ga (k)} -\1_{z\ge y} \frac{(z-y)^{k-1}}{\Ga (k)} \right] dz.
  \]
  We shall now use the same trick, as when analyzing diffusion, by separating the part of the expression in the square brackets that would contributed to
  boundary terms after the $k$th order differentiation (everything is of course simpler for integer $k$). Thus we write
\[
L^{D_k} g (y) = \int \int \frac{d^k}{d(-z)^k} g(z)
\]
\[
\times  \left[\nu (z, dw) \1_{w\ge y} \frac{(w-y)^{k-1}}{\Ga (k)}
 +( \nu (y, dw)- \nu (z, dw))\1_{z\ge y} \frac{(z-y)^{k-1}}{\Ga (k)}
 -\nu (y, dw) \1_{z\ge y} \frac{(z-y)^{k-1}}{\Ga (k)} \right] dz.
\]
Integration by parts using \eqref{eqintpartsfrac1}, assuming that the kernel $\nu (x, dy)$ is $k$ times differentiable with respect to $x$ as a measure, yields

 \[
 L^{D_k}g (y) =- g(y) \int \nu (y, dw)
 \]
 \begin{equation}
 \label{eqdualkjump}
 +\int g(z)  \frac{\pa ^k }{\pa z^k} \int \left[ \nu (z, dw) \1_{w\ge y} \frac{(w-y)^{k-1}}{\Ga (k)}
 +( \nu (y, dw)- \nu (z, dw))\1_{z\ge y} \frac{(z-y)^{k-1}}{\Ga (k)} \right] dz.
 \end{equation}

 For integer $k$, the term containing $\nu(y,dw)$ in the square brackets becomes superfluous.
 In particular, for $k=1$ and $k=2$ this simplifies to
 \begin{equation}
 \label{eqdual1jump}
  L^{D_1}g (y) =\int g(z) \left[\1_{z<y} \int_{w\ge y} \frac{\pa \nu}{\pa z}(z,dw)
 -\1_{z\ge y} \int_{w< y} \frac{\pa \nu}{\pa z}(z,dw)\right] dz -g(y) \int \nu (y,dw),
 \end{equation}
 and
  \[
  L^{D_2}g (y) =\int g(z) dz \bigl[\1_{z<y} \int_{w\ge y} (w-y) \frac{\pa^2 \nu}{\pa z^2}(z,dw)
 \]
 \begin{equation}
 \label{eqdual2jump}
 +\1_{z\ge y} \left(\int_{w< y} (y-w) \frac{\pa ^2 \nu}{\pa z^2}(z,dw) + \frac{\pa ^2 }{\pa z^2}\int (z-w) \nu (z,dw)\right)\bigr]
 -g(y) \int \nu (y,dw)
 \end{equation}
 respectively.

For operator \eqref{eqdualkjump} to be conservative and conditionally positive, the function
\begin{equation}
 \label{eqdualkjumpboundcons0}
L(z,y)=\frac{\pa ^{k-1} }{\pa z^{k-1}} \int \left[ \nu (z, dw) \1_{w\ge y} \frac{(w-y)^{k-1}}{\Ga (k)}
 +( \nu (y, dw)- \nu (z, dw))\1_{z\ge y} \frac{(z-y)^{k-1}}{\Ga (k)} \right]
\end{equation}
has to be positive non-decreasing and has to satisfy the boundary conditions
\[
L(z,y)|_{z=-\infty}^{\infty}=\int \nu (y,dw).
\]
Simplest natural conditions ensuring the latter can be taken as follows:
\begin{equation}
 \label{eqdualkjumpboundcons1}
\lim_{z\to -\infty} \frac{\pa ^{k-1} }{\pa z^{k-1}} \int_y^{\infty}(w-y)^{k-1} \nu (z, dw)=0,
\end{equation}
\begin{equation}
 \label{eqdualkjumpboundcons2}
\lim_{z\to +\infty} \frac{\pa ^{k-1} }{\pa z^{k-1}} \int \left( \1_{w\ge y} (w-y)^{k-1}-(z-y)^{k-1}\right) \nu (z, dw)=0.
 \end{equation}

 Summarizing, we get the following.

\begin{theorem}
\label{thstochmonkthorderjump}
Suppose a Feller process $X_t^x$ is generated by operator \eqref{eqjumpgen} with a bounded positive kernel $\nu (x,dy)$ such that
its derivatives with respect to $x$ up to and including order $k$ exists as (possibly signed) stochastic kernels
and \eqref{eqdualkjumpboundcons1} holds. Then Markov dual of order $k$ exists if and only if function \eqref{eqdualkjumpboundcons0}
is nondecreasing and condition \eqref{eqdualkjumpboundcons2} holds. If this is the case the generator of the dual process is given by
 \[
  L^{D_k}g (y)=\frac{1}{\Ga (k)}\int (g(z)-g(y))
  \]
  \begin{equation}
 \label{eqdualkjump1}
  \times \frac{\pa ^k }{\pa z^k} \int \left[ \nu (z, dw) \1_{w\ge y} (w-y)^{k-1}
 +(\nu (y, dw)- \nu (z, dw))\1_{z\ge y} (z-y)^{k-1} \right] dz.
 \end{equation}
 If one has inequality $\le$ in \eqref{eqdualkjumpboundcons2}, rather than equality, then the dual exists as a sub-Markov process,
 the generator being given by
 \eqref{eqdualkjump}.
\end{theorem}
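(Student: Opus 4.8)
The plan is to specialize the master formula for dual generators \eqref{eqdualgengen}, namely $L^{D_k}=I_k^+\circ L'\circ \frac{d^k}{d(-y)^k}$ coming from Proposition \ref{propdualsemi}, to the jump generator \eqref{eqjumpgen}, exactly as was done for the diffusion generators in Theorems \ref{thstochmonkthorderdifint} and \ref{thstochmonkthorderdifre}. The computation of $L^{D_k}$ has in fact already been carried out in the displays culminating in \eqref{eqdualkjump}: one inserts the adjoint $L'$, applies $F^{-1}=\frac{d^k}{d(-y)^k}$ followed by $F=I_k^+$, and performs a fractional integration by parts via \eqref{eqintpartsfrac1}. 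What remains for the theorem is therefore not a fresh calculation but the interpretation of \eqref{eqdualkjump} and its difference form \eqref{eqdualkjump1} as the generator of a genuine (sub-)Markov process, together with the equivalence to stochastic monotonicity of order $k$.

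The algebraic heart of the derivation is the splitting performed just before the integration by parts, in which one adds and subtracts the term carrying $\nu(y,dw)$ so as to isolate the part of the bracketed expression that would produce boundary contributions after $k$-fold differentiation in $z$. Integrating by parts then yields the L\'evy-type expression \eqref{eqdualkjump}, consisting of a pure killing term $-g(y)\int\nu(y,dw)$ and an integral term whose kernel is the $z$-derivative of the function $L(z,y)$ of \eqref{eqdualkjumpboundcons0}. As in the diffusion case, I would stress that this subtraction is arranged precisely so that the resulting jump measure carries no atom at $z=y$, the boundary contributions at $z=\pm\infty$ being collected separately and governed by \eqref{eqdualkjumpboundcons1} and \eqref{eqdualkjumpboundcons2}.

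With \eqref{eqdualkjump} in hand, the characterization reduces to deciding when it is a conservative, conditionally positive operator, i.e. a Markov pre-generator. The jump kernel in \eqref{eqdualkjump1} is $\frac{\pa}{\pa z}L(z,y)\,dz$, so it is a positive measure if and only if $L(z,y)$ is non-decreasing in $z$; this is exactly stochastic monotonicity of order $k$ read off at the level of generators, matching the equivalence supplied by the $k$th order Siegmund theorem \ref{thstochmonkthorder}. For conservativeness one evaluates the total jump mass as $L(+\infty,y)-L(-\infty,y)$: condition \eqref{eqdualkjumpboundcons1} forces $L(-\infty,y)=0$ (the factor $\1_{z\ge y}$ vanishes for $z\to-\infty$), while \eqref{eqdualkjumpboundcons2} together with the explicit $(z-y)^{k-1}$ contribution forces $L(+\infty,y)=\int\nu(y,dw)$, so the total jump intensity exactly balances the killing rate and one obtains a Markov dual generated by \eqref{eqdualkjump1}. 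If \eqref{eqdualkjumpboundcons2} holds only with $\le$, the jump intensity falls short of the killing rate, a genuine killing term survives, and the dual exists as a sub-Markov process with generator \eqref{eqdualkjump}.

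The main obstacle, and the only place where genuine care is needed, is the rigorous justification of the fractional integration by parts and the attendant boundary bookkeeping: under the stated differentiability hypotheses on $\nu(x,dy)$ one must check that the manipulations of \eqref{eqintpartsfrac1} are licit, that $\frac{\pa}{\pa z}L(z,y)\,dz$ indeed has no atom at $z=y$, and that the limits at $\pm\infty$ encoded in \eqref{eqdualkjumpboundcons1}--\eqref{eqdualkjumpboundcons2} account for all boundary terms. A secondary point is the passage from a formally conservative conditionally positive operator to an actual Feller generator, and the Markov versus sub-Markov dichotomy; but this is precisely what is inherited from the general semigroup duality of Proposition \ref{propdualsemi} and the Siegmund-type Theorem \ref{thstochmonkthorder}, so no new work is required there.
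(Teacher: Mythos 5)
Your proposal is correct and follows essentially the same route as the paper: the paper's proof of Theorem \ref{thstochmonkthorderjump} is precisely the computation preceding the statement (applying $L^{D_k}=I_k^+\circ L'\circ \frac{d^k}{d(-y)^k}$, the add-and-subtract of the $\nu(y,dw)$ term to isolate boundary contributions, fractional integration by parts via \eqref{eqintpartsfrac1}, and then reading off positivity of the jump kernel from monotonicity of $L(z,y)$ and conservativity from the boundary conditions \eqref{eqdualkjumpboundcons1}--\eqref{eqdualkjumpboundcons2}), after which the theorem is stated as a summary. Your accounting of the limits $L(-\infty,y)=0$ and $L(+\infty,y)=\int\nu(y,dw)$ and of the sub-Markov case under the inequality $\le$ matches the paper's argument.
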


\begin{remark}
As we assumed maximum regularity, the measure of jumps of the dual process turns out to be absolutely continuous with respect to Lebesgue measure.
More generally, the dual generator would look like
\[
  L^{D_k}g (y)=\frac{1}{\Ga (k)}\int (g(z)-g(y))
  \]
  \begin{equation}
 \label{eqdualkjump1}
  \times d_z \frac{\pa ^{k-1} }{\pa z^{k-1}} \int \left[ \nu (z, dw) \1_{w\ge y} (w-y)^{k-1}
 +(\nu (y, dw)- \nu (z, dw))\1_{z\ge y} (z-y)^{k-1} \right].
 \end{equation}
  \end{remark}

We have built dual generators separately for diffusive and jump parts of the original generators.
For an arbitrary Feller process with a pseudo-differential generator the dual is constructed by putting these parts together.
As an example let us consider Markov processes that are martingales, that is, they have generators of the form

\begin{equation}
\label{eqgenmarting}
Lg(x)=a(x)\frac{d^2}{dx^2} +\int [g(y)-g(x)-g'(x)(y-x)] \nu (x, dy)
\end{equation}
with
\[
\int \min (|y-x|, (y-x)^2) \nu (x, dy) <\infty.
\]
As the duality of all orders reverses the sign of the drift, for self-duality one has necessarily the condition
\begin{equation}
\label{eqgenmartingselfducond}
\int (y-x) \nu (x, dy)=0,
\end{equation}
where the integral can be understood in the sense of the main value. Taking these into account and looking at formulas
\eqref{eqdual1jump} and \eqref{eqdual1jump} we arrive at the following.

\begin{theorem}
\label{thselfdual12mart}
Let $k=1$ or $k=2$ and let a Feller process $X_t^x$ be generated by an operator of type \eqref{eqgenmarting} with a continuously differentiable nonnegative function
$a(x)$ and continuously differentiable in $x$ L\'evy kernel $\nu$.
Then the process $X_t^x$ is self-dual of order $1$ or $2$, if \eqref{eqgenmartingselfducond} holds and
\begin{equation}
\label{eqgenmartingselfducond1}
\nu (y,dz)=\1_{z<y} d_z \int_{w\ge y} \nu (z,dw)
 -\1_{z\ge y} d_z \int_{w< y} \nu (z,dw),
\end{equation}
or
\begin{equation}
\label{eqgenmartingselfducond2}
\nu (y,dz)=\1_{z<y} d_z \int_{w\ge y} (w-y) \frac{\pa \nu}{\pa z} (z,dw)
 +\1_{z\ge y} d_z \int_{w< y}(y-w) \frac{\pa \nu}{\pa z} (z,dw)
\end{equation}
respectively.
\end{theorem}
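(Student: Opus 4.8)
The plan is to verify the self-duality relation $L^{D_k}=L$ directly, by splitting the martingale generator \eqref{eqgenmarting} into a diffusive and a jump part and applying the componentwise dual formulas already obtained. First I would expand the compensated jump operator as
\[
\int[g(z)-g(x)-g'(x)(z-x)]\,\nu(x,dz)=\int[g(z)-g(x)]\,\nu(x,dz)-b(x)g'(x),\quad b(x)=\int(z-x)\,\nu(x,dz),
\]
so that $L$ becomes the sum of the pure diffusion $a(x)\,d^2/dx^2$, the first-order drift $-b(x)\,d/dx$, and the uncompensated jump operator $\int[g(z)-g(x)]\,\nu(x,dz)$ of type \eqref{eqjumpgen}. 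Since $L^{D_k}=F\circ L'\circ F^{-1}$ is linear, each piece can be dualized separately and summed.

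Next I would invoke the observation already recorded in the text that duality of every order reverses the sign of the drift: the $k$th-order dual of $-b(x)\,d/dx$ carries a drift of opposite sign, so a self-dual martingale must have vanishing drift. This yields the necessary condition \eqref{eqgenmartingselfducond}, $b\equiv 0$, and reduces $L$ to $a(x)\,d^2/dx^2+\int[g(z)-g(x)]\,\nu(x,dz)$. For the diffusive part I would use the diffusion dual formulas: when $k=2$ the pure diffusion is self-dual, because once $b\equiv 0$ the boundary/integral correction built from $B_2,A_2$ in Theorem \ref{thstochmonkthorderdifre} vanishes identically (this is precisely why $k=2$ is the martingale-preserving order); when $k=1$ the dual of $a(x)\,d^2/dx^2$ contributes the extra drift $a'(y)\,d/dy$, which must be tracked and reconciled with the jump contribution.

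It then remains to dualize the jump operator via the specializations \eqref{eqdual1jump} and \eqref{eqdual2jump} and to impose $L^{D_k}=L$. Comparing the resulting dual jump operator with the original term $\int[g(z)-g(y)]\,\nu(y,dz)$ shows that self-duality holds exactly when the Lévy kernel $\nu(y,dz)$ equals the kernel appearing in the dual, that is, when \eqref{eqgenmartingselfducond1} holds for $k=1$ and \eqref{eqgenmartingselfducond2} holds for $k=2$; reading off these two identities is the content of the theorem.

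I expect the main obstacle to be the careful bookkeeping of the boundary terms produced by the integration by parts underlying \eqref{eqdual1jump}--\eqref{eqdual2jump}, together with checking that the kernel so obtained is a genuine positive, conservative Lévy kernel, so that the matching conditions \eqref{eqgenmartingselfducond1}--\eqref{eqgenmartingselfducond2} not only make the two operators formally coincide but also certify that the dual is again a bona fide martingale generator. The delicate point specific to $k=1$ is absorbing the $a'(y)\,d/dy$ drift coming from the diffusion dual consistently with the martingale structure, which is where the self-duality computation must be handled with most care.
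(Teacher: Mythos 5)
Your plan coincides with the paper's own (very terse) argument: the theorem is derived there in exactly the way you describe, by noting that duality of every order reverses the sign of the drift (forcing \eqref{eqgenmartingselfducond}) and then matching the original L\'evy kernel against the dual jump kernels read off from \eqref{eqdual1jump} and \eqref{eqdual2jump}. The one point where you go beyond the text is in flagging the extra drift $a'(y)\,d/dy$ produced by the order-$1$ dual of $a(x)\,d^2/dx^2$; that term is real, the paper passes over it in silence, and it does need to be accounted for (e.g.\ by taking $a$ constant or absorbing it into an additional hypothesis) for the $k=1$ case to hold literally as stated.
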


Let us note finally that the relation between dual generators become more transparent in differential form (even though in this way some information of boundary behavior is lost).
For instance, differentiating equations \eqref{eqgenmartingselfducond1} and \eqref{eqgenmartingselfducond2} with respect to $y$ once or twice respectively, yields
the differential relations
\begin{equation}
\label{eqgenmartingselfducond1d}
d_y\nu (y,dz)=d_z \nu (z,dy),
\end{equation}
and
\begin{equation}
\label{eqgenmartingselfducond2d}
d_y\frac{\pa \nu}{\pa y} (y,dz)=d_z \frac{\pa \nu}{\pa z}\nu (z,dy),
\end{equation}
respectively. These equations take specially simple form
\begin{equation}
\label{eqgenmartingselfducondden}
\frac{\pa \nu}{\pa y} \nu (y,z)=\frac{\pa \nu}{\pa z} \nu (z,y), \quad
\frac{\pa^2 \nu}{\pa y^2} \nu (y,z)=\frac{\pa^2 \nu}{\pa z^2} \nu (z,y)
\end{equation}
respectively, if $\nu(x,dy)$ has a density, $\nu (x,y)$, with respect to Lebesgue measure.

\section{Time-nonhomogeneous extension}

Equation \eqref{eqpropdualopesemig} suggests the necessity to include time reversion when studying time-non\-homogeneous situation.
We mean just a simple time reversion around a deterministic time, not a more sophisticated general time reversion, as developed, say, in \cite{Dynk} or \cite{ChuWal}.

Let us recall that a family $U_{s,t}$, $0\le s \le t$, of transformations in $B(X)$ (or $C(X)$ or $C_{\infty}(X)$) for locally compact spaces $X,Y$
is called a (backward) propagator if $U_{t,t}$ is the identity operator and the chain rule, or the propagator equation, holds for $t\leq s\leq r$: $U^{t,s}U^{s,r}=U^{t,r}$.

Suppose ${U^{t,r}}$ is a strongly continuous backward propagator of bounded linear operators in $C_{\infty}(X)$ with a common dense invariant domain $D$.
 Let ${A_t}$, $t\geq0$, be a family of linear operators $D\mapsto B$ that are strongly continuous in $t$.
  Let us say that the family ${A_t}$ {\it generates} ${U^{t,r}}$ on $D$ if,
for any $f\in D$, the equations
\begin{equation}
\label{Generates}
\frac{d}{ds}U^{t,s}f = U^{t,s}A_sf, \quad \frac{d}{ds}U^{s,r}f = -A_sU^{s,r}f, \quad 0\leq t\leq s\leq r,
\end{equation}
hold for all $s$ with the derivatives taken in the topology of $B$, where for $s=t$ (resp. $s=r$) it is assumed to be only a right (resp. left) derivative.
The second equation (which in fact follows from the first one under mild natural conditions) implies by duality that, for any $T$,
\begin{equation}
\label{Generatesdual}
\frac{d}{ds}U'_{T-t,T-s} = -A'_{T-s}U'_{T-t,T-s}, \quad 0\leq s\leq t \le T,
\end{equation}
in the weak sense.

The time-nonhomogeneous counterpart of Proposition \ref{propdualsemi} (with the literally the same proof) reads as follows.

\begin{prop}
\label{propdualpropa}
(i) Let $f$ be a bounded measurable function separating measures on $X$ and $U_{s,t}$, $s\le t$, a (backward)
propagator of integral operators in $B(X)$ specified by the family of bounded signed kernel $p_{s,t}(x,dz)$ from $X$ to $X$.
Then, for any $T>0$,  the operators $U_{s,t}^{D(f,T)}$ in $F(\MC(X))$, $f$-dual to $U_{T-t,T-s}$, also form a propagator and
\begin{equation}
\label{eqpropdualopesemig1}
U_{s,t}^{D(f,T)}=F \circ U'_{T-t,T-s} \circ F^{-1}.
\end{equation}

(ii)
If the propagator $\{U_{s,t}\}$ is strongly continuous in $C_{\infty}(X)$ with an invariant domain $D$
and is generated by a family of operator $A_t :D \to C_{\infty}$, then
\[
\frac{d}{ds}U_{s,t}^{D(f,T)}g= - F \circ A'_{T-s} \circ F^{-1}U_{s,t}^{D(f,T)}g,
\]
that is, the generator of the propagator $\{U_{s,t}^{D(f,T)}\}$ is
\begin{equation}
\label{eqdualgengennonhom}
A_s^{D(f,T)}=F \circ A'_{T-s} \circ F^{-1}.
\end{equation}
\end{prop}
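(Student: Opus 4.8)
The plan is to follow the proof of Proposition \ref{propdualsemi} verbatim, reducing everything to the single-operator statement of Proposition \ref{propdualop} applied at each fixed pair of times, with one extra bookkeeping step that tracks how the time reversion interacts with the order-reversing nature of dualization. First I would fix $T$ and, for each pair $s \le t$, apply Proposition \ref{propdualop}(ii) to the single integral operator $U = U_{T-t,T-s}$, which carries a bounded signed kernel because the propagator $\{U_{s,t}\}$ does. Since $f$ separates measures on $X$, that proposition produces a well-defined operator on $F(\MC(X))$ equal to $F \circ U'_{T-t,T-s} \circ F^{-1}$; declaring this to be $U_{s,t}^{D(f,T)}$ yields \eqref{eqpropdualopesemig1} at once, together with the fact that it is genuinely $f$-dual to $U_{T-t,T-s}$.

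The next step is to verify that the family $\{U_{s,t}^{D(f,T)}\}$ is itself a backward propagator. For $s \le t \le r$ I would compute, using \eqref{eqpropdualopesemig1} and cancelling the inner $F^{-1}F$,
\[
U_{s,t}^{D(f,T)} U_{t,r}^{D(f,T)} = F\, U'_{T-t,T-s}\, U'_{T-r,T-t}\, F^{-1}.
\]
The crucial point is that dualization is an anti-homomorphism, $(UV)' = V'U'$, so that $U'_{T-t,T-s}U'_{T-r,T-t} = (U_{T-r,T-t}U_{T-t,T-s})'$; and since $T-r \le T-t \le T-s$, the propagator equation for $\{U_{s,t}\}$ gives $U_{T-r,T-t}U_{T-t,T-s} = U_{T-r,T-s}$. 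Hence the product equals $F\,U'_{T-r,T-s}\,F^{-1} = U_{s,r}^{D(f,T)}$, which is exactly the propagator equation, while $U_{t,t}^{D(f,T)} = F\,U'_{T-t,T-t}\,F^{-1} = \mathrm{id}$. This is precisely where the time reversion earns its keep: the order reversal produced by $(\cdot)'$ is compensated by the order reversal produced by the substitution $t \mapsto T-t$, so the two cancel and the forward propagator structure is restored.

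For part (ii) I would differentiate \eqref{eqpropdualopesemig1} in $s$ on the invariant domain, using the weak dual evolution \eqref{Generatesdual}, that is $\frac{d}{ds}U'_{T-t,T-s} = -A'_{T-s}U'_{T-t,T-s}$. Inserting $F^{-1}F$ between the two resulting factors gives
\[
\frac{d}{ds}U_{s,t}^{D(f,T)} = -F\,A'_{T-s}\,U'_{T-t,T-s}\,F^{-1} = -\bigl(F\,A'_{T-s}\,F^{-1}\bigr)\,U_{s,t}^{D(f,T)},
\]
which identifies the generator of the dual propagator as $A_s^{D(f,T)} = F\,A'_{T-s}\,F^{-1}$, namely \eqref{eqdualgengennonhom}.

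The algebra above is routine; the real work lies in the domain bookkeeping, which I expect to be the main obstacle. One must check that $F^{-1}$ is only ever applied on the image $F(\MC(X))$, where it is defined; that this image is invariant under the dual propagator so the compositions make sense at every time; and that the strong continuity and the invariant domain $D$ of $\{U_{s,t}\}$ transfer through $F$ and $F^{-1}$, so that the differentiation in (ii) is legitimate in the relevant topology on $F(\MC(X))$. These are exactly the points already settled, in the homogeneous setting, within Proposition \ref{propdualsemi}, so I would invoke them rather than repeat them, which is the sense in which the argument is \emph{literally the same}.
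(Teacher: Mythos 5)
Your proposal is correct and follows exactly the route the paper intends: the paper's ``proof'' is literally the instruction to repeat the argument of Proposition \ref{propdualsemi} (hence of Proposition \ref{propdualop}) with the time reversal $t\mapsto T-t$ inserted, and your verification that the order reversal of $(\cdot)'$ cancels against the order reversal of the substitution $T-\cdot$, together with the differentiation via \eqref{Generatesdual}, is precisely that argument spelled out. No gaps; if anything, you make explicit the one point (the anti-homomorphism bookkeeping) that the paper leaves implicit.
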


The propagator $\{U_{s,t}^{D(f,T)}\}$ will be called $(f,T)$-dual to $\{U_{s,t}\}$.

Let us describe the probabilistic analog of this duality.
We shall write $X_t^{x,s}$ for a Markov process at time $t>s$ with initial position $x$ at time $s$.
For a function $f$ on $X\times Y$ with two metric (or measurable) spaces $X,Y$ and a number $T$, we say that the Markov processes $Y_t^{y,s}$ in $Y$
 is $(f,T)$-dual
to the Markov process $X_t^{x,s}$ in $X$, if
 \begin{equation}
 \label{eqdeffTdual}
\E f(x,Y_t^{y,s}) =\E f(X_{T-s}^{x, T-t},y)
\end{equation}
for all $x\in X, y\in Y$ and $s\le t \le T$, where $\E$ on the left hand side and the right hand side correspond to the distributions
of processes $Y_t$ and $X_t$ respectively.
In particular case of $X=Y$ and $f(x,y)=\1_{x\ge y}$ (where $\ge$ is any measurable partial order on $X$) this reduces to
\begin{equation}
 \label{eqdefTdualorder}
\P (Y_t^{y,s}\le x) =\P (X_{T-s}^{x, T-t} \ge y).
\end{equation}

 Thus the duality of Markov processes is equivalent to the duality of their propagators. Proposition \ref{propdualpropa} implies that
 dual distributions to a Markov process automatically form a Markov family, as their transition operators form a propagator and hence
 satisfy the chain rule (or Chapman-Kolmogorov equation).

It is now clear that all our results have a natural  counterpart for time-dependent generators.
Namely, let us say that
a Markov process $Y_t^y$ is dual to $X_t^x$ of order $k$, $k\in \R$, if
\begin{equation}
\label{eqdefputcallsymmgennonhom}
\E  (x-Y_t^{y,s})_+^{k-1}=\E (X_{T-s}^{x, T-t} - y)_+^{k-1}.
\end{equation}
The characterization in terms of generators or stochastic monotonicity remains the same, once the time dependence is adjusted appropriately,
that is via \eqref{eqpropdualopesemig1} and \eqref{eqdualgengennonhom}.

\section*{Appendix}

For completeness, we deduce here the fundamental solutions of the generators of L\'evy stable motions and fractional derivative operators,
as well as the related integration by parts formulas.

Recall that the characteristic function of a $\be$-stable L\'evy motion for $\be \in (0,1)\cup (1,2)$ equals
\[
\exp \{ -t\si |p|^{\be} e^{i\pi \ga \, {\rm sgn}\, p/2}\}
\]
where $\si>0$ is the scale and $\ga $ is the skewness parameter satisfying the conditions $|\ga| \le \be$ or
$|\ga|\le 2-\be$ for $\be \in (0,1)$ or $\be \in (1,2)$ respectively.
For simplicity, we omit the discussion of a more complicated general case $\be =1$, and for this case will deal only with the symmetric case $\ga=0$,
 for which the above formulas remain valid.

Thus the generator $L_{\be, \ga, \si}$ of this L\'evy motion is the pseudo-differential
 operator with the symbol (denoted with some abuse of notation by the same letter)
\[
L_{\be, \ga, \si}(p)=-\si |p|^{\be} e^{i\pi \ga \, {\rm sgn}\, p/2}
\]
meaning that $L_{\be, \ga, \si}$ acts on the Fourier transform $\FC(f)(p)=\hat f(p)=\int e^{-ixp} f(x) \, dx$ of a function $f$
as the multiplication by $L_{\be, \ga, \si}(p)$.

The fundamental solution of the operator $L_{\be, \ga, \si}$ equals
\[
f(x)=\left[{\FC}^{-1} \frac{1}{L_{\be, \ga, \si}(p)}\right](x)
=-\frac{1}{2\pi} \int_{\infty}^{\infty}\frac{e^{ipx} dp}{\si |p|^{\be} e^{i\pi \ga \, {\rm sgn} \, p/2}}.
\]
In other words,
\[
f=-\frac{1}{\si} \FC^{-1}\left[  e^{-i\pi \ga/2} p_+^{-\be} + e^{i\pi \ga/2} p_-^{-\be}\right].
\]
Using known formulas for the Fourier transforms (in the sense of distributions) of one-sided powers (see e.g. \cite{GeSh}, p. 176),
that is
\[
(\FC^{-1} p_{\pm}^{\la})(x)=\frac{\pm i}{2 \pi} e^{\pm i \la \pi /2} \Ga (\la +1) (x\pm i0)^{-\la-1}
\]
 (slight deviations in our notation from the Fourier transform used in \cite{GeSh} are taken into account),
where
\[
 (x\pm i0)^{\la}=x_+^{\la} +e^{\pm i \la \pi} x_-^{\la},
 \]
we find that
\[
f(x)=-\frac{\Ga (1-\be)}{2\pi \si} \left[i e^{-i (\be+\ga) \pi /2} (x+i0)^{\be -1}-i e^{i (\be+\ga) \pi /2} (x-i0)^{\be -1}\right]
\]
leading to
\begin{equation}
 \label{eqfundsolfraclevy}
f(x) =-\frac{\Ga (1-\be)}{\si \pi} \left[\sin (\pi (\be +\ga)/2) x_+^{\be -1} +\sin (\pi (\be -\ga)/2) x_-^{\be -1} \right].
\end{equation}

The most important cases are the fully skewed motions with $\ga =\pm \be$ or $\ga =\pm (2-\be)$ for $\be \in (0,1)$ or $\be \in (1,2)$ respectively,
and the symmetric motions with $\ga =0$. In these cases for $\be \in (0,1)$ the generators are negations of fractional
derivatives, that is

\begin{equation}
 \label{eqdeffractionalderivative2}
L_{\be, \pm \be, 1}f=- \frac{d^{\be}}{d( \pm x)^{\be}}f(x) = - \frac{1}{\Gamma (-\be)}\int_0^{\infty} (f(x \mp y)-f(x))\frac {dy}{y^{1+\be}},
\end{equation}
and
\begin{equation}
 \label{eqdeffractionalderivative3}
L_{\be, 0, 1}f =- \left|\frac{d}{dx}\right|^{\be} = -\frac{1}{2 \cos (\pi \be /2)} \left(\frac{d^{\be}}{d x^{\be}}+\frac{d^{\be}}{d(-x)^{\be}}\right),
\end{equation}
see e.g. Sect 1.8 in \cite{Kobook11}, where the fractional derivatives can be defined either by the corresponding expressions on the r.h.s.
of these formulas, or
 equivalently via the following Fourier transforms:

\begin{equation}
 \label{eqFourierforfracionalder1}
 \FC(\frac{d^{\be}}{d(\pm x)^{\be}}f)(p)=\exp \{\pm i\pi \be \, {\rm sgn} \,
 p/2\}|p|^{\be} \FC(f)(p),
\end{equation}
\begin{equation}
 \label{eqFourierforfracionalder2}
 \FC(\frac{d^{\be}}{d x^{\be}}+\frac{d^{\be}}{d(-x)^{\be}})(p)
 = 2\cos (\pi \be /2) |p|^{\be}\FC(f)(p).
\end{equation}

The processes generated by $-\frac{d^{\be}}{d(\pm x)^{\be}}$ with $\be \in (0,1)$ are called the stable L\'evy subordinators.


From \eqref{eqfundsolfraclevy} it follows that the functions
\begin{equation}
 \label{eqfundsolfrac1}
\frac{\Ga (1-\be)}{\pi} \sin (\pi \be) x_{\pm}^{\be -1}=\frac{x_{\pm}^{\be -1}}{\Ga (\be)}
\end{equation}
(where the equation $\Ga (\be)\Ga (1-\be)=\pi/ \sin (\pi \be)$ was used),
represent fundamental solutions for the operators $d^{\be}/d(\pm x)^{\be}$.

Fractional derivatives of order higher than $1$ can be defined by the compositions of the derivatives of order $\be \in (0,1)$ with the
derivatives of an integer order. Namely, for $\be \in (n,  n+1)$, $n\in \N$, one defines
\[
\frac{d^{\be}}{d( \pm x)^{\be}}=\frac{d^n}{d(\pm x)^n}\frac{d^{\be-n}}{d( \pm x)^{\be-n}}
\]
with the second component given by \eqref{eqdeffractionalderivative2}. It is then easy to check that formulas \eqref{eqfundsolfrac1} for the fundamental solutions remain valid for all $\be=k+1>0$, $\be \notin \N$.
Formula \eqref{eqdeffractionalderivative3} remains valid for $\be \in (1,2)$, but in \eqref{eqdeffractionalderivative2} the sign has to be changed
leading to
\begin{equation}
 \label{eqdeffractionalderivative4}
L_{\be, \pm (2-\be), 1}f=\frac{d^{\be}}{d( \pm x)^{\be}}f(x)
= \frac{1}{\Gamma (1-\be)}\int_0^{\infty} ((\pm f')(x \mp y)-(\pm f)' (x))\frac {dy}{y^{\be}}.
\end{equation}

Turning to the integration by parts, let us define, for $k\ge 1$,  the fractional integration operator $I_k^{\pm}: \MC_k^{\pm}(\R) \to B(\R)$, by the equation
\begin{equation}
 \label{eqdeffracint1}
(I_k^{\pm}Q)(y)=\int \frac{(x-y)^{k-1}_{\pm}}{\Ga (k)} Q(dx),
\end{equation}
where
\[
\MC_k^{\pm}=\{Q\in \MC(\R):  \int x_{\pm}^{k-1} |Q| (dx) <\infty \},
\]
and
\[
 x_+^k= \left\{
\begin{aligned}
& x^k, \quad x\ge 0 \\
& 0, \quad x<0.
\end{aligned}
\right. , \quad x_-^k(x)= (-x)_+^k.
\]
The image $\IC^{\pm}_1$ of $I_1^{\pm}$ (defined on $\MC_1^{\pm}=\MC(\R)$) is the set of right- (respectively left-) continuous
functions of finite total variation, tending to zero at $\pm \infty$. Moreover, $(I_1^{\pm}Q)'=\mp Q$ in the sense of distributions.
The image $\IC_k^{\pm}$ of $I_k^{\pm}$, $k>1$, consists of continuous functions $g$ tending to zero at $\pm \infty$ and such that
\[
\frac{d^k}{d(\mp x)^k} g \in \MC_k^{\pm} (\R)
\]
in the sense of distributions. Moreover,
\[
 \frac{d^k}{d(\mp x)^k} \circ I_k^{\pm}, \quad I_k^{\pm} \circ  \frac{d^k}{d(\mp x)^k}
 \]
 are the identity operators in $\MC_k^{\pm} (\R)$ and  $\IC_k^{\pm}$ respectively. Other simple formulas worth mentioning are
\begin{equation}
 \label{eqintpartsfrac0}
I_k^{\pm} \circ \frac{d}{d(\mp x)} = \left\{
\begin{aligned} & I_{k-1}^{\pm}, \quad k>1, \\
& d^{1-k}/d(\mp x)^{1-k}, \quad k<1,
\end{aligned}
\right.
\end{equation}
\begin{equation}
 \label{eqintpartsfrac00}
\frac{d^k}{dx^k}\frac{(x-a)^{k-1}_+}{\Gamma (k)}=\de_a(x), \quad k>1.
\end{equation}

 Let now $\phi_{\pm} =I_k^{\pm} Q_{\pm}$ with some $Q_{\pm} \in \MC^{\pm}_k$. By Fubini's theorem
 \[
 \int_{\R^2}  \frac{(x-y)^{k-1}_{+}}{\Ga (k)} Q_+(dx)Q_-(dy)
  =\int (I_k^+Q_+)(y)Q_-(dy)=\int (I_k^-Q_-)(x)Q_+(dx).
  \]
The last equation can be called the integration-by-parts formula, as it rewrites as
\begin{equation}
 \label{eqintpartsfrac1}
\int \phi_+(y)\frac{d^k}{dy^k} \phi_- (dy)= \int \phi_-(x)\frac{d^k}{d(-x)^k} \phi_+ (dx)
\end{equation}
(where the derivatives are defined, generally speaking, in the sense of distributions and represent measures, not necessarily functions).

It is important to stress that this formula holds not only for the integration over $\R$, but also for the integration over an
interval or a half-line, the corresponding boundary terms being taken into account automatically
by the measures $d^k \phi_{\pm} (x)/d(\mp x)^k$.

Finally, for $k\in (0,1)$ we can define fractional integration \eqref{eqdeffracint1} on functions $g\in B(\R)\cap L^1(\R)$, that is as
\begin{equation}
 \label{eqdeffracint2}
(I_k^{\pm}g)(y)=\int \frac{(x-y)^{k-1}_{\pm}}{\Ga (k)} g(x) \, dx,
\end{equation}
in which case the image belongs to the set of continuous functions with the sup-norm bounded by $\|g\|/k+\|g\|_{L^1}$ and
\eqref{eqintpartsfrac1} still holds by the same reasoning.

\end{document}